\tikzset{edgee/.style = {->,> = latex'}}
\newcolumntype{P}[1]{>{\centering\arraybackslash}p{#1}}
\newcolumntype{M}[1]{>{\centering\arraybackslash}m{#1}}
\newcommand\R{\mathbb{R}}
\newcommand{\A}{\mathcal{A}}
\newcommand{\ipa}{\mathrm{L}}
\begin{document}

\title{A combinatorial statistic for labeled threshold graphs}
\author{Priyavrat Deshpande}
\address{Department of Mathematics, Chennai Mathematical Institute, India 603103}
\email{pdeshpande@cmi.ac.in}
\author{Krishna Menon}
\address{Department of Mathematics, Chennai Mathematical Institute, India 603103}
\email{krishnamenon@cmi.ac.in}
\author{Anurag Singh}
\address{Department of Mathematics, Chennai Mathematical Institute, India 603103}
\email{anuragsingh@cmi.ac.in}
\thanks{PD and AS are partially supported by a grant from the Infosys Foundation}

\begin{abstract}
Consider the collection of hyperplanes in $\mathbb{R}^n$ whose defining equations are given by $\{x_i + x_j = 0\mid 1\leq i<j\leq n\}$. 
This arrangement is called the threshold arrangement since its regions are in bijection with labeled threshold graphs on $n$ vertices.
Zaslavsky's theorem implies that the number of regions of this arrangement is the sum of coefficients of the characteristic polynomial of the arrangement. 
In the present article we give a combinatorial meaning to these coefficients as the number of labeled threshold graphs with a certain property, thus answering a question posed by Stanley.
\end{abstract}

\keywords{threshold graph, hyperplane arrangement, finite field method}
\subjclass[2020]{52C35, 05C30, 11B73}
\maketitle

\section{Introduction}\label{intro}
A \emph{hyperplane arrangement} $\A$ is a finite collection of affine hyperplanes (i.e., codimension $1$ subspaces and their translates) in $\R^n$.
A \emph{region} of $\A$ is a connected component of $\R^n\setminus \bigcup \A$.
The number of regions of $\A$ is denoted by $r(\mathcal{A})$.
The poset of non-empty intersections of hyperplanes in an arrangement $\mathcal{A}$ ordered by reverse inclusion is called its \emph{intersection poset} denoted by $\ipa(\A)$.
The ambient space of the arrangement (i.e., $\mathbb{R}^n$) is an element of the intersection poset; considered as the intersection of none of the hyperplanes.
The \emph{characteristic polynomial} of $\A$ is defined as 
\[\chi_\A (t) := \sum_{x\in\ipa(\A)} \mu(\hat{0},x)\, t^{\dim(x)}\]
where $\mu$ is the M\"obius function of the intersection poset and $\hat{0}$ corresponds to $\R^n$.
Using the fact that every interval of the intersection poset of an arrangement is a geometric lattice, we have
\begin{equation}
    \chi_\A(t) = \sum_{i=0}^n (-1)^{n-i} c_i t^i \label{char equation}
\end{equation}
where $c_i$ is a non-negative integer for all $0 \leq i \leq n$ \cite[Corollary 3.4]{stanarr07}.
The characteristic polynomial is a fundamental combinatorial and topological invariant of
the arrangement and plays a significant role throughout the theory of hyperplane arrangements.

In this article our focus is on the enumerative aspects of (rational) arrangements in $\R^n$. 
In that direction we have the following seminal result by Zaslavsky.

\begin{theorem}[\cite{zas75}]\label{zas}
Let $\A$ be an arrangement in $\R^n$. Then the number of regions of $\A$ is given by 
\begin{align*}
   r(\A) &= (-1)^n \chi_{\A}(-1) \\
         &= \sum_{i=0}^n c_i. 
\end{align*}
\end{theorem}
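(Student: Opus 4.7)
The second equality is a routine substitution: evaluating equation~\eqref{char equation} at $t=-1$ gives
\[
(-1)^n\chi_\A(-1) \;=\; (-1)^n\sum_{i=0}^n (-1)^{n-i} c_i (-1)^i \;=\; \sum_{i=0}^n c_i,
\]
so the real content is the first equality $r(\A) = (-1)^n\chi_\A(-1)$. My plan is to prove this by induction on the cardinality $|\A|$, using the deletion-restriction recurrence as the main engine.

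Fix a hyperplane $H \in \A$, and set the \emph{deletion} $\A' := \A \setminus \{H\}$ and the \emph{restriction} $\A'' := \{K \cap H : K \in \A',\ K \cap H \neq \emptyset\}$, regarded as a hyperplane arrangement inside $H \cong \R^{n-1}$. The first step is to establish the two parallel recurrences
\[
r(\A) \;=\; r(\A') + r(\A''), \qquad \chi_\A(t) \;=\; \chi_{\A'}(t) - \chi_{\A''}(t).
\]
The recurrence for $r$ is geometric: each region of $\A'$ is either left intact as a region of $\A$ (if $H$ does not cross it) or sliced by $H$ into exactly two regions of $\A$, and the sliced regions of $\A'$ are in bijection with the regions of $\A''$ inside $H$. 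The recurrence for $\chi_\A$ is combinatorial: one partitions the flats of $\ipa(\A)$ according to whether or not they are contained in $H$, compares with the flats of $\ipa(\A')$ and $\ipa(\A'')$, and applies Möbius inversion.

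With both recurrences in hand, the induction is clean. The base case $\A=\emptyset$ gives $r(\A)=1$ and $\chi_\A(t)=t^n$, so $(-1)^n\chi_\A(-1)=1$. For the inductive step, since $\A''$ lives in $\R^{n-1}$ and $|\A'|, |\A''| < |\A|$, the induction hypothesis yields $r(\A') = (-1)^n\chi_{\A'}(-1)$ and $r(\A'') = (-1)^{n-1}\chi_{\A''}(-1)$, whence
\[
r(\A) \;=\; r(\A') + r(\A'') \;=\; (-1)^n\bigl(\chi_{\A'}(-1) - \chi_{\A''}(-1)\bigr) \;=\; (-1)^n\chi_\A(-1).
\]

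The main obstacle I anticipate is the deletion-restriction identity for $\chi_\A(t)$. The $r$-recurrence is geometrically transparent, but the Möbius-function identity on $\ipa(\A)$ requires a careful case analysis of flats based on their interaction with $H$, together with Möbius inversion over suitable sub-intervals of the intersection poset. Once that combinatorial identity is verified, the rest of the argument reduces to bookkeeping inside the induction.
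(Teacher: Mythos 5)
The paper does not prove this statement at all --- it is quoted from Zaslavsky \cite{zas75} and used as a black box --- so there is no internal proof to compare against. Your plan is the standard modern proof of Zaslavsky's theorem (essentially the one in Stanley's notes \cite{stanarr07}), and it is sound. The reduction of the second equality to a substitution in \eqref{char equation} is correct, since $(-1)^n(-1)^{n-i}(-1)^i = (-1)^{2n} = 1$. The induction scheme is also set up correctly: you induct on $|\A|$ with the ambient dimension allowed to drop, and the base case of the empty arrangement in any dimension gives $r=1=(-1)^n\chi(-1)$. The two deferred ingredients are both genuine but standard: (i) for $r(\A)=r(\A')+r(\A'')$ you need the bijection between regions of $\A'$ cut by $H$ and regions of $\A''$, which follows because every region of $\A''$ is an open subset of $H$ lying in a unique region of $\A'$ that it then bisects; (ii) for $\chi_\A(t)=\chi_{\A'}(t)-\chi_{\A''}(t)$ the cleanest route is not a direct M\"obius-inversion case analysis but Whitney's theorem, $\chi_\A(t)=\sum_{\mathcal{B}}(-1)^{|\mathcal{B}|}t^{\dim\bigcap\mathcal{B}}$ summed over central subarrangements $\mathcal{B}\subseteq\A$, after which one splits the sum according to whether $H\in\mathcal{B}$; you should also record that the flats of $\A''$ are exactly the flats of $\A$ contained in $H$. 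With those two lemmas supplied, your argument is complete; what it buys over the paper's citation is a self-contained, elementary induction, at the cost of about a page of standard deletion--restriction bookkeeping.
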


When the regions of an arrangement are in bijection with a certain combinatorially defined set, one could ask if there is a corresponding `statistic' on the set whose distribution is given by the $c_i$'s.
For example, the regions of the braid arrangement in $\R^n$ (whose hyperplanes are given by the equations $x_i-x_j = 0$ for $1\leq i<j\leq n$) correspond to the $n!$ permutations of $[n]$.
The characteristic polynomial of this arrangement is 
$t(t-1)\cdots(t-n+1)$ \cite[Corollary 2.2]{stanarr07}. 
Hence, $c_i$'s are the unsigned Stirling numbers of the first kind. 
Consequently, the distribution of the statistic `number of cycles' on the set of permutations is given by the coefficients of the characteristic polynomial.

In this paper, we consider the following arrangement in $\R^n$ given by
\begin{equation*}
    \mathcal{T}_n := \{x_i + x_j = 0 \mid 1\leq i < j\leq n \}.
\end{equation*}The regions of $\mathcal{T}_n$ are known to be in bijection with labeled threshold graphs on $n$ vertices.
Labeled threshold graphs on $n$ vertices are inductively constructed starting from the empty graph.
Vertices labeled $1,\ldots,n$ are added in a specified order.
At each step, the vertex added is either `dominant' or `recessive'.
A dominant vertex is one that is adjacent to all vertices added before it and a recessive vertex is one that is isolated from all vertices added before it.

\begin{example}\label{constex}
\Cref{tgconst1} shows a construction of a labeled threshold graph on $5$ vertices.
\begin{figure}[H]
    \centering
    \begin{tikzpicture}[scale=0.5]
    \node (2) [circle,draw=black,inner sep=2pt] at (0,0) {\small $2$};
    \node (4) at (0.75,-2) {\small \color{white} 4};
    \node at (1.5,0) {$\rightarrow$};
    \end{tikzpicture}
    \begin{tikzpicture}[scale=0.5]
        \node (2) [circle,draw=black,inner sep=2pt] at (0,0) {\small $2$};
        \node (3) [circle,draw=black,inner sep=2pt] at (2,0) {\small $3$};
        \node (4) at (0.75,-2) {\small \color{white} 4};
        \node at (3.5,0) {$\rightarrow$};
    \end{tikzpicture}
    \begin{tikzpicture}[scale=0.5]
        \node (2) [circle,draw=black,inner sep=2pt] at (0,0) {\small $2$};
        \node (3) [circle,draw=black,inner sep=2pt] at (2,0) {\small $3$};
        \node (1) [circle,draw=black,inner sep=2pt] at (1,2) {\small $1$};
        \node (4) at (0.75,-2) {\small \color{white} 4};
        \node at (3.5,0) {$\rightarrow$};
        \draw (3)--(1)--(2);
    \end{tikzpicture}
    \begin{tikzpicture}[scale=0.5]
        \node (2) [circle,draw=black,inner sep=2pt] at (0,0) {\small $2$};
        \node (3) [circle,draw=black,inner sep=2pt] at (2,0) {\small $3$};
        \node (1) [circle,draw=black,inner sep=2pt] at (1,2) {\small $1$};
        \draw (3)--(1)--(2);
        \node at (3.5,0) {$\rightarrow$};
        \node (4) [circle,draw=black,inner sep=2pt] at (1,-2) {\small $4$};
        \draw (1)--(4);
        \draw (2)--(4);
        \draw (3)--(4);
    \end{tikzpicture}
    \begin{tikzpicture}[scale=0.5]
        \node (2) [circle,draw=black,inner sep=2pt] at (0,0) {\small $2$};
        \node (3) [circle,draw=black,inner sep=2pt] at (2,0) {\small $3$};
        \node (1) [circle,draw=black,inner sep=2pt] at (1,2) {\small $1$};
        \draw (3)--(1)--(2);
        \node (4) [circle,draw=black,inner sep=2pt] at (1,-2) {\small $4$};
        \node (5) [circle,draw=black,inner sep=2pt] at (3.5,0) {\small $5$};
        \draw (1)--(4);
        \draw (2)--(4);
        \draw (3)--(4);
    \end{tikzpicture}
    \caption{Construction of a labeled threshold graph on $5$ vertices.}
    \label{tgconst1}
\end{figure}
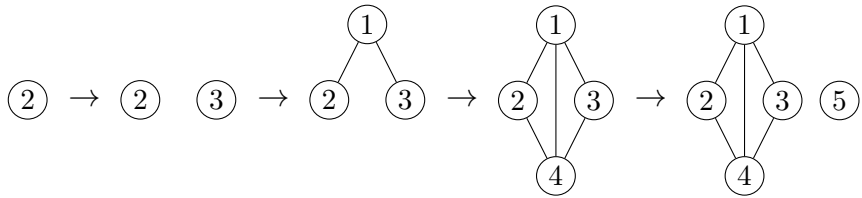
\end{example}

The question of giving combinatorial meaning to the coefficients of the characteristic polynomial of the threshold arrangement as counting certain threshold graphs is an unsolved problem in Stanley's notes \cite[Exercise 5.26]{stanarr07}.
The main aim of this paper is to answer this question.

Any construction of a labeled threshold graph can be expressed as a \textit{signed permutation} (see \cref{signpermdef} for the definition).
The construction associated to a signed permutation is the one where the vertices are added in the order given by the permutation such that any number assigned a $+$ sign is dominant while others are recessive.
For example, the signed permutation associated to the construction in \cref{constex}, if we assume the vertex $2$ is added as a dominant vertex, is $\overset{+}{2}\overset{-}{3}\overset{+}{1}\overset{+}{4}\overset{-}{5}$.
Clearly, there may be more than one way to construct a particular labeled threshold graph.
For example, it can be checked that the construction associated to $\overset{-}{3}\overset{-}{2}\overset{+}{4}\overset{+}{1}\overset{-}{5}$ also yields the same graph as that in \cref{constex}.
Hence, after making a canonical choice of construction for each graph, we can represent labeled threshold graphs by certain signed permutations.
Spiro \cite{spiro20}, using this logic, showed that labeled threshold graphs are in bijection with certain signed permutations called `threshold pairs in standard form' (see \cref{tpsfdef}).

\begin{definition}
In any construction of a labeled threshold graph, an `anchor' is a vertex whose label is smaller than that of all vertices added after it.
An `odd' anchor is one that is of different type, dominant or recessive, from the previous anchor.
The first anchor is considered odd if it is dominant.
\end{definition}

Observe that $1$ is always the first anchor (and hence it is an odd anchor if it is dominant) and the last number is always the last anchor.
The construction corresponding to $\overset{+}{2}\overset{-}{3}\overset{+}{1}\overset{+}{4}\overset{-}{5}$, shown in \cref{tgconst1}, has anchors $1$, $4$ and $5$, but only $1$ and $5$ are odd anchors.

We show that for a given labeled threshold graph, modulo a certain convention about the vertex $1$ which is explained in \cref{tgraphs}, the number of odd anchors does not depend on how it is constructed (\cref{noa}).
The main theorem of this article is:

\begin{theorem*}
Let the number of threshold graphs on $n$ vertices with $j$ odd anchors be $T(n,j)$ and the characteristic polynomial of $\mathcal{T}_n$ be $\chi_{\mathcal{T}_n}(t) = \sum\limits_{j=0}^n (-1)^{n-j} c_j t^j$.
Then,
\begin{equation*}
    c_j = T(n,j) = \sum_{k=j}^n (-1)^{n-k} (S(n,k) + n \cdot S(n-1,k))\cdot a(k,j)
\end{equation*}where $a(k,j)$ is the coefficient of $t^j$ in $(t+1)(t+3)\cdots (t+(2k-1))$ and
$S(n, k)$ are the Stirling numbers of the second kind.
\end{theorem*}
This is proved in \cref{threshcoeff} and \cref{thm} below.
Hence, `number of odd anchors' is a statistic on labeled threshold graphs whose distribution is given by the coefficients of the characteristic polynomial of the threshold arrangement.

The outline of the paper is as follows:
Before studying the threshold arrangement, we give combinatorial meaning to the coefficients of the characteristic polynomial of the type $B$ arrangement in \cref{typb}.
We show that the regions of the type $B$ arrangement are in bijection with signed permutations and that the coefficients count the signed permutations based on the number of `odd cycles'.
Since labeled threshold graphs can be represented as signed permutations, this statistic is later used to define one on the graphs.
In \Cref{prelim}, we derive expressions for the coefficients of the characteristic polynomial of the threshold arrangement using the finite field method.
We also exhibit a bijection between the regions of the threshold arrangement and labeled threshold graphs.
In \Cref{tperms}, we define certain signed permutations in bijection with labeled threshold graphs called `threshold permutations'.
We then show that the coefficients of the characteristic polynomial count the threshold permutations based on the number of odd cycles.
Finally, in \cref{tgraphs}, we translate this interpretation of the coefficients to labeled threshold graphs.

\section{The arrangement of type B}\label{typb}
Before giving combinatorial meaning to the coefficients of the characteristic polynomial of the threshold arrangement, as a warm-up exercise, we do so for the type $B$ arrangement.
The type $B$ arrangement in $\mathbb{R}^n$, which we denote by $\mathcal{B}_n$, has hyperplanes
\begin{equation*}
    \{x_i\pm x_j=0 \mid 1 \leq i < j \leq n\} \cup \{x_i = 0 \mid i \in [n]\}.
\end{equation*}It can be shown that the regions of this arrangement are in bijection with signed permutations of $[n]$.

\begin{remark}\label{signpermdef}
Recall that a signed permutation is a permutation with an assignment of signs, either `$+$' or `$-$', to each of its elements. 
In this article we denote a signed permutation either as a pair $(\pi, w)$ where $\pi = \pi_1\pi_2\cdots\pi_n$ is the permutation on $[n]$ and $w=w_1w_2\ldots w_n$ is the sign sequence, i.e., the sign $w_i$ is assigned to $\pi_i$, or as $\overset{\pm}{\pi_1}\cdots\overset{\pm}{\pi_n}$ where the sign assigned to a number is written above it.
\end{remark}

The type $B$ region associated to a signed permutation $(\pi,w)$ is the one where
\begin{equation*}
    0 < w_1 x_{\pi_1} < w_{2} x_{\pi_{2}} < \cdots < w_{n} x_{\pi_{n}}.
\end{equation*}
Also, using the finite field method, it can be shown \cite[Section 5.1]{stanarr07} that the characteristic polynomial of $\mathcal{B}_n$ is
\begin{equation*}
    \chi_{\mathcal{B}_n}(t)=(t-1)(t-3) \cdots (t-(2n-1)).
\end{equation*}
We know that the absolute values of the coefficients sum to the number of regions, here being $2^n n!$, the number of signed permutation on $[n]$.
We will now exhibit a statistic on signed permutations that induces this break up.

A signed permutation can be split into cycles, just as usual permutations.
We will break a permutation into `signed cycles' in a slightly different manner as follows:
First break the permutation into compartments by drawing a line before the permutation and then repeatedly drawing a line after the least number after the last line drawn.
This is repeated until a line is drawn at the end of the permutation.
We then convert each compartment into a signed cycle by drawing an arrow from each number to the number after it in the compartment (treat the first number in a compartment as the one `after' the last).
These arrows are labeled with the sign assigned to the number it is pointing to.
This method of breaking up a permutation into cycles is very similar to the one used by Stanley in the fundamental bijection on the set of permutations \cite[Section 1.3]{stanbook}.

\begin{example}\label{ex1}
The signed permutation on $[6]$ given by
\begin{equation*}
    \overset{+} 3 \  \overset{+} 1 \  \overset{-} 6 \  \overset{-} 7 \  \overset{-} 5 \  \overset{+} 2 \  \overset{-} 4
\end{equation*}is split into compartments as
\begin{equation*}
    \textcolor{blue}{|} \  \overset{+} 3 \  \overset{+} 1 \  \textcolor{blue}{|} \  \overset{-} 6 \  \overset{-} 7 \  \overset{-} 5 \  \overset{+} 2 \  \textcolor{blue}{|} \  \overset{-} 4 \  \textcolor{blue}{|}
\end{equation*}and is drawn in the cyclic form as shown in \Cref{fig:my_label}.

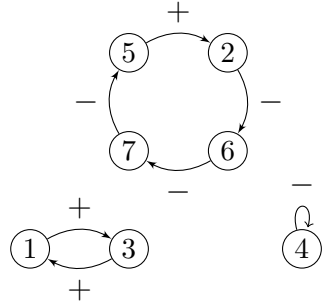
\begin{figure}[H]
    \centering
    \begin{tikzpicture}[scale=0.65]
    \node (1) [circle,draw=black,inner sep=2pt] at (0,0) {\small $1$};
    \node (3) [circle,draw=black,inner sep=2pt] at (2,0) {\small $3$};
    \node (4) [circle,draw=black,inner sep=2pt] at (5.5,0) {\small $4$};
    \node (2) [circle,draw=black,inner sep=2pt] at (4,4) {\small $2$};
    \node (6) [circle,draw=black,inner sep=2pt] at (4,2) {\small $6$};
    \node (7) [circle,draw=black,inner sep=2pt] at (2,2) {\small $7$};
    \node (5) [circle,draw=black,inner sep=2pt] at (2,4) {\small $5$};
    \draw [edgee] (1) to [bend left] node[above] {$+$} (3);
    \draw [edgee] (3) to [bend left] node[below] {$+$} (1);
    \path (4) edge [loop above] node {$-$} (4);
    \draw [edgee] (2) to [bend left] node[right] {$-$} (6);
    \draw [edgee] (6) to [bend left] node[below] {$-$} (7);
    \draw [edgee] (7) to [bend left] node[left] {$-$} (5);
    \draw [edgee] (5) to [bend left] node[above] {$+$} (2);
    \end{tikzpicture}
    \caption{Cycle structure associated to the signed permutation in \Cref{ex1}.}
    \label{fig:my_label}
\end{figure}
\end{example}

To obtain the permutation back from the cycle structure, first find the cycle containing $1$ and start writing the numbers from the number $1$ is pointing to (with sign given on the arrow) in the order specified by the cycle.
Hence we will end with writing $1$.
Next, we delete this cycle and do the same for the cycle containing the least of the remaining numbers.
We repeat this process till all cycles are deleted.

A signed cycle is said to be `odd' if there are an odd number of arrows with a $-$ sign on them.
The term `odd cycle' was defined by Reiner \cite{reiner1993signed}.
The number of odd cycles is just the number of compartments that have an odd number of $-$ signs in them.
For example, the signed permutation in \Cref{ex1} has two odd cycles.
\begin{proposition}
There are
\begin{equation*}
    a(n,j)=\sum_{i=j}^n c(n,i) \cdot \binom{i}{j} \cdot 2^{n-i}
\end{equation*}signed permutations on $[n]$ with $j$ odd cycles, where $c(n,i)$ is the unsigned Stirling number of the first kind; it is the number of permutations of $[n]$ with $i$ cycles.
\end{proposition}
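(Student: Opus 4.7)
The plan is to count, for each underlying (unsigned) permutation of $[n]$, how many sign assignments produce exactly $j$ compartments with an odd number of minus signs, and then sum over underlying permutations.

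First I would observe that the compartment decomposition described in the text is essentially Stanley's fundamental transformation applied to the one-line form of the underlying unsigned permutation (with left-to-right minima in place of the usual left-to-right maxima, and placed at the end of each compartment rather than the beginning). The explicit inverse procedure given immediately after \Cref{ex1} confirms that this is indeed a bijection between unsigned permutations of $[n]$ in one-line notation and permutations of $[n]$ written in canonical cycle form. In particular, the number of permutations of $[n]$ whose compartment decomposition yields exactly $i$ compartments equals the number of permutations of $[n]$ with $i$ cycles in the usual sense, which is $c(n,i)$.

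Next, fix such an underlying permutation with compartments of lengths $\ell_1, \ldots, \ell_i$, where $\sum_k \ell_k = n$. Since each number in $[n]$ receives an independent sign and is pointed to by exactly one arrow in the cyclic picture, a signed one-line permutation corresponds bijectively to an assignment of $\ell_k$ signs to the arrows of the $k$-th cycle for each $k$. For a single cycle of length $\ell_k$, the elementary identity
\[\sum_{r \text{ odd}} \binom{\ell_k}{r} = 2^{\ell_k - 1}\]
shows that exactly $2^{\ell_k - 1}$ of the $2^{\ell_k}$ sign assignments make the cycle odd and the remaining $2^{\ell_k - 1}$ make it even. Hence, the number of sign assignments on the whole permutation in which exactly $j$ of the $i$ cycles are odd equals
\[\binom{i}{j} \prod_{k=1}^{i} 2^{\ell_k - 1} \;=\; \binom{i}{j}\, 2^{n-i},\]
a quantity that depends only on $i$ and $j$, not on the individual cycle lengths.

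Summing over the $c(n,i)$ underlying permutations with $i$ compartments and over all valid $i \geq j$ then gives
\[a(n,j) = \sum_{i=j}^{n} c(n,i)\, \binom{i}{j}\, 2^{n-i},\]
as claimed. The only step requiring real verification is the bijective correspondence between signed one-line permutations and signed cycle structures with the correct cycle-count statistic, but this is immediate from the two mutually inverse constructions given in the text; the rest is bookkeeping via a standard parity identity.
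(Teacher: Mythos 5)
Your proposal is correct and follows essentially the same route as the paper: identify the underlying permutation's cycle/compartment structure (counted by $c(n,i)$), choose which $j$ of the $i$ cycles are odd, and use the parity identity $\sum_{r\text{ odd}}\binom{\ell}{r}=2^{\ell-1}$ to get the factor $2^{n-i}$ independent of the cycle lengths. The paper's version is just a terser statement of the same counting argument, taking the bijection between one-line signed permutations and signed cycle structures for granted where you spell it out.
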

\begin{proof}
To construct a signed permutation on $[n]$ with $j$ odd cycles, we have to choose $j$ cycles from a permutation of $[n]$ and have to assign a $-$ sign to an odd number of arrows in these cycles and assign a $-$ sign to an even number of arrows for the other cycles.
If a cycle has $k$ elements there are $2^{k-1}$ ways to choose an odd (respectively even) number of arrows from it.
Using these facts, we get the above formula.
\end{proof}

\begin{remark}
Alternatively, it can be shown that these numbers satisfy the recurrence relation
\begin{equation*}
    a(n, j) = (2n-1) \cdot a(n-1, j) + a(n-1, j-1)
\end{equation*}with initial conditions $a(n,0)=(2n-1) \cdot (2n-3) \cdots 3 \cdot 1$ and $a(n,n)=1$ for all $n \geq 1$.
This recurrence relation can be used to show that the coefficient of $t^j$ in $\chi_{\mathcal{B}_n}(t)$ gives the number of signed permutations having $j$ odd cycles.
\end{remark}

This sequence of numbers is given in the OEIS \cite{oeis} as \href{http://oeis.org/A028338}{A028338}.

\section{The threshold arrangement}\label{prelim}
We now use the finite field method to compute the characteristic polynomial of $\mathcal{T}_n$ and then extract its coefficients.
We begin by stating a generalization of the finite field method, given by Athanasiadis \cite{athanasiadis1999extended}, in our context. 

\begin{theorem}[{\cite[Theorem 2.1]{athanasiadis1999extended}}]\label{ffm}
If $\mathcal{A}$ is a sub-arrangement of the type $B$ arrangement in $\R^n$, there exists an integer $k$ such that for all odd integers $q$ greater than $k$,
    \begin{equation*}
        \chi_\mathcal{A}(q)=\#(\mathbb{Z}_q^n \setminus V_{\mathcal{A}})
    \end{equation*}
    where $V_{\mathcal{A}}$ is the union of hyperplanes in $\mathbb{Z}_q^n$ obtained by reducing $\mathcal{A}$ mod q.
\end{theorem}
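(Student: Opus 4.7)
The plan is to apply Möbius inversion on the intersection poset of the reduction $\mathcal{A}_q$ of $\mathcal{A}$ modulo $q$, and then to identify this poset with $\ipa(\mathcal{A})$ once $q$ is large enough and odd.

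First I would expand $\#(\mathbb{Z}_q^n \setminus V_{\mathcal{A}})$ by inclusion--exclusion on $V_{\mathcal{A}}$. Using that every flat $x$ in $\ipa(\mathcal{A}_q)$ is an affine subspace with exactly $q^{\dim(x)}$ points, the standard Möbius argument yields
\[\#(\mathbb{Z}_q^n \setminus V_{\mathcal{A}}) = \sum_{x \in \ipa(\mathcal{A}_q)} \mu(\hat{0}, x)\, q^{\dim(x)}.\]
So it suffices to prove that for $q$ large enough and odd, reduction modulo $q$ gives a dimension-preserving poset isomorphism $\ipa(\mathcal{A}) \cong \ipa(\mathcal{A}_q)$. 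The Möbius values of matched flats then coincide (they depend only on the poset structure), and the right-hand side above becomes $\chi_{\mathcal{A}}(q)$ by the definition of the characteristic polynomial recalled in the introduction.

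To justify the poset isomorphism, I would use that every hyperplane of $\mathcal{A}$ has coefficients in $\{0, \pm 1\}$. Whether a sub-collection $\mathcal{S} \subseteq \mathcal{A}$ has a non-empty intersection of a prescribed dimension is determined by the vanishing pattern of the integer minors of the associated coefficient matrix together with the augmented matrix; these minors are bounded in absolute value by some constant $M(n)$ depending only on $n$. If $q > M(n)$ then a minor vanishes over $\mathbb{Z}_q$ if and only if it vanishes over $\mathbb{Z}$, so the rank of each linear system, and hence the intersection dimension, is the same whether computed over $\mathbb{R}$ or over $\mathbb{Z}_q$. In particular no two distinct flats of $\ipa(\mathcal{A})$ get identified after reduction.

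The main obstacle is a more elementary one that the bound $M(n)$ does not address: ensuring that distinct hyperplanes of $\mathcal{A}$ themselves do not coincide modulo $q$. This is exactly the role of the oddness hypothesis. At $q = 2$ the hyperplanes $x_i - x_j = 0$ and $x_i + x_j = 0$ become identical since $1 \equiv -1$, whereas for any odd $q$ the values $\pm 1$ remain distinct in $\mathbb{Z}_q$, so the list of defining equations of $\mathcal{A}$ remains one-to-one after reduction. Taking $k$ to be the maximum of $M(n)$ and $2$ then supplies the integer required by the theorem and completes the proof.
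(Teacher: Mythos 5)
The paper offers no proof of this statement --- it is quoted verbatim from Athanasiadis --- so your attempt can only be measured against the standard argument it is clearly reaching for. Your skeleton (M\"obius inversion over the reduced intersection poset, then a dimension-preserving isomorphism $\ipa(\mathcal{A})\cong\ipa(\mathcal{A}_q)$) is the right one, but there is a genuine gap in how you treat composite moduli, and the theorem is asserted for all large odd $q$, not just primes. Over $\mathbb{Z}_q$ with $q$ composite, the number of solutions of a linear system is \emph{not} controlled by which minors are nonzero mod $q$: the congruence $3x\equiv 0 \pmod 9$ has three solutions even though the $1\times 1$ minor $3$ is nonzero both over $\mathbb{Z}$ and mod $9$. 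Consequently your step ``a minor vanishes over $\mathbb{Z}_q$ iff it vanishes over $\mathbb{Z}$, hence rank and intersection dimension agree'' establishes neither that each flat of $\mathcal{A}_q$ has exactly $q^{\dim x}$ points (which your opening M\"obius computation silently assumes) nor that the reduction map on flats is well behaved. What is actually needed is that the elementary divisors of each subsystem's coefficient matrix are \emph{units} mod $q$, not merely nonzero, and choosing $q$ larger than a bound $M(n)$ on the minors does not deliver this: $3<15$ yet $\gcd(3,15)=3$.

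The repair uses the specific structure of type $B$ rather than a size bound. Every row of the relevant coefficient matrices is $e_i$, $e_i+e_j$ or $e_i-e_j$, and a classical fact about incidence matrices of signed graphs says that every minor of such a matrix is $0$ or $\pm 2^k$. Hence for \emph{every} odd $q$ the Smith normal form of each subsystem has unit elementary divisors in $\mathbb{Z}_q$, which simultaneously gives $|x|=q^{\dim x}$ for each flat, the bijectivity of reduction on flats, and preservation of dimension. This --- invertibility of $2$ --- is the true role of the oddness hypothesis; the observation that $x_i+x_j$ and $x_i-x_j$ stay distinct mod odd $q$ is just the $2\times 2$ instance of the same fact, not the main point. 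Once this lemma replaces your $M(n)$ argument, your proof closes, and in fact no largeness threshold $k$ is needed for central sub-arrangements of type $B$ (the threshold in Athanasiadis's general statement is there to accommodate affine deformations, which do not occur here).
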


Since the threshold arrangement is a sub-arrangement of the type $B$ arrangement, we can use the above theorem to calculate its characteristic polynomial.

\begin{proposition}
The characteristic polynomial of the threshold arrangement $\mathcal{T}_n$ is
\begin{equation*}
    \chi_{\mathcal{T}_n}(t)=\sum_{k = 1}^n (S(n,k)+n \cdot S(n-1,k))\prod_{i=1}^k(t-(2i-1)).
\end{equation*}
Here $S(n,k)$ are the Stirling numbers of the second kind. 
\end{proposition}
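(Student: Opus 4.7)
The plan is to apply the finite field method (\cref{ffm}) to $\mathcal{T}_n$: for every sufficiently large odd prime $q$ I will compute $|\mathbb{Z}_q^n \setminus V_{\mathcal{T}_n}| = \chi_{\mathcal{T}_n}(q)$, and then invoke polynomial equality on infinitely many integers. Concretely, I must count tuples $(x_1,\dots,x_n) \in \mathbb{Z}_q^n$ with $x_i + x_j \neq 0$ for every $1 \le i < j \le n$. Since $q$ is odd, $2v \equiv 0$ forces $v = 0$, so the condition rephrases as: (i) at most one coordinate is $0$, and (ii) for each of the $(q-1)/2$ pair classes $\{a,-a\} \subseteq \mathbb{Z}_q^\times$, all coordinates landing in $\{a,-a\}$ share a common value.

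I would then stratify the count by the set partition $\pi$ of $[n]$ defined by $i \sim j \iff x_i = x_j$. For a fixed $\pi$ with $k$ blocks, I split into two subcases. \emph{Subcase A} (no block gets value $0$): the blocks receive distinct values from distinct pair classes of $\mathbb{Z}_q^\times$, each with an independent sign, giving
\[
\frac{q-1}{2}\cdot\frac{q-3}{2}\cdots\frac{q-2k+1}{2}\cdot 2^k \;=\; \prod_{i=1}^k \bigl(q-(2i-1)\bigr)
\]
ordered value assignments. \emph{Subcase B} (one block gets value $0$): such a block must be a singleton of $\pi$ since $0 + 0 = 0$, and the remaining $k-1$ blocks still need distinct pair classes from $\mathbb{Z}_q^\times$, contributing $s(\pi)\prod_{i=1}^{k-1}\bigl(q-(2i-1)\bigr)$ assignments, where $s(\pi)$ denotes the number of singleton blocks of $\pi$.

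Summing Subcase A over partitions of $[n]$ with $k$ blocks yields $S(n,k)\prod_{i=1}^k(q-(2i-1))$. For Subcase B I would establish the identity
\[
\sum_{\pi:\,\#\pi = k} s(\pi) \;=\; n\cdot S(n-1, k-1)
\]
by double-counting pairs $(\pi, i)$ with $\{i\} \in \pi$: for each $i \in [n]$, the admissible $\pi$ are in bijection with set partitions of $[n]\setminus\{i\}$ into $k-1$ blocks. Summing over $k$ and reindexing the Subcase B sum via $k \mapsto k+1$ (using $S(n-1, 0) = 0$ for $n \ge 2$ to kill the new $k=0$ term and $S(n-1, n) = 0$ to harmlessly include a $k=n$ term) merges both contributions into
\[
\sum_{k=1}^n \bigl(S(n,k) + n\cdot S(n-1, k)\bigr)\prod_{i=1}^k \bigl(q - (2i-1)\bigr),
\]
which, holding for infinitely many $q$, yields the claimed polynomial identity.

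The most delicate step I expect is the bookkeeping of Subcase B: I must argue carefully that the value-$0$ block is forced to be a singleton (so that $s(\pi)$ enters naturally) and then track the product lengths, since Subcase B's product has one fewer factor than Subcase A's, to ensure the reindexing cleanly merges the two sums into the compact coefficient $S(n,k) + n\cdot S(n-1, k)$ without stray boundary terms.
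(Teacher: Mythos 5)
Your proposal is correct and follows essentially the same route as the paper: both apply the Athanasiadis finite field method for sub-arrangements of type $B$, split the count of admissible points in $\mathbb{Z}_q^n$ according to whether the value $0$ is attained (necessarily by a single coordinate), and obtain the coefficients $S(n,k)$ and $n\cdot S(n-1,k)$ together with the product $\prod_{i=1}^k(q-(2i-1))$ from choosing $k$ pair classes $\{a,-a\}$ with signs. The only difference is organizational — you stratify by the equality partition of the coordinates and weight by singleton blocks, while the paper directly counts surjections onto a chosen $k$-element value set via $k!\,S(n,k)$ — and this does not change the substance of the argument.
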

\begin{proof}
Using \cref{ffm}, we see that the characteristic polynomial of $\mathcal{T}_n$ satisfies, for large odd values of $q$,
\begin{equation*}
    \chi_{\mathcal{T}_n}(q)=|\{(a_1,\ldots,a_n) \in \mathbb{Z}_q^n \mid a_i+a_j \neq 0\text{ for all } 1 \leq i < j \leq n\}|.
\end{equation*}This means that we need to count the functions $f:[n] \rightarrow \mathbb{Z}_q$ such that
\begin{enumerate}
    \item $|f^{-1}(0)| \leq 1$.
    \item $f$ can take at most one value from each of the sets
    \begin{equation*}
        \{1,-1\},\{2,-2\},\ldots,\{\frac{q-1}{2},-\frac{q-1}{2}\}.
    \end{equation*}
\end{enumerate}

We split the count into the two cases. If 0 is not attained by $f$, then all values must be from
\begin{equation*}
    \{1,-1\}\cup\{2,-2\}\cup\cdots\cup\{\frac{q-1}{2},-\frac{q-1}{2}\}.
\end{equation*}with at most one value attained in each set. So, there are
\begin{equation*}
    \binom{\frac{q-1}{2}}{k} \cdot 2^k \cdot k! \cdot S(n,k)
\end{equation*}ways for $f$ to attain values from exactly $k$ of these sets. This is because we have $\binom{\frac{q-1}{2}}{k} \cdot 2^k$ ways to choose the $k$ sets and which element of each set $f$ should attain and $k! \cdot S(n,k)$ ways to choose the images of the elements of $[n]$ after making this choice. So the total number of $f$ such that 0 is not attained is
\begin{equation*}
    \sum\limits_{k=1}^{n}\binom{\frac{q-1}{2}}{k} \cdot 2^k \cdot k! \cdot S(n,k).
\end{equation*}

When 0 is attained, there are $n$ ways to choose which element of $[n]$ gets mapped to 0 and using a similar logic for choosing the images of the other elements, we get that the total number of $f$ where 0 is attained is
\begin{equation*}
    n \cdot \sum\limits_{k=1}^{n-1}\binom{\frac{q-1}{2}}{k} \cdot 2^k \cdot k! \cdot S(n-1,k).
\end{equation*}

So we get that for large odd values of $q$,
\begin{align*}
    \chi_{\mathcal{T}_n}(q)&=\sum\limits_{k=1}^{n}\binom{\frac{q-1}{2}}{k} \cdot 2^k \cdot k! \cdot S(n,k) + n \cdot \sum\limits_{k=1}^{n-1}\binom{\frac{q-1}{2}}{k} \cdot 2^k \cdot k! \cdot S(n-1,k)\\
    &=\sum_{k = 1}^n (S(n,k)+n \cdot S(n-1,k))\prod_{i=1}^k(q-(2i-1)).
\end{align*}Since $\chi_{\mathcal{T}_n}$ is a polynomial, we get the required result.
\end{proof}

\begin{corollary}\label{threshcoeff}
The coefficient of $t^j$ in $\chi_{\mathcal{T}_n}(t)$ is
\begin{equation*}
    \sum_{k=j}^n (-1)^{k-j} (S(n,k)+n \cdot S(n-1,k)) a(k,j)
\end{equation*}
where $a(k,j)$ is the coefficient of $t^j$ in $(t+1)(t+3)\cdots (t+(2k-1))$, which, by observations made in \Cref{typb}, is the number of signed permutations on $[k]$ with $j$ odd cycles.
\end{corollary}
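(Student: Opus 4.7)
The plan is to take the factored form of $\chi_{\mathcal{T}_n}(t)$ from the preceding proposition and re-expand each factor in the monomial basis; once this is done, the coefficient of $t^j$ drops out by interchanging the order of summation.

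The key preliminary identity is
\[\prod_{i=1}^k (t-(2i-1)) = \sum_{j=0}^k (-1)^{k-j}\, a(k,j)\, t^j.\]
This follows directly from the defining expansion $(t+1)(t+3)\cdots(t+(2k-1)) = \sum_{j=0}^k a(k,j)\, t^j$ by substituting $t \mapsto -t$ and pulling a factor of $(-1)^k$ out of the resulting product, so that the sign on the $t^j$-term becomes $(-1)^k \cdot (-1)^j = (-1)^{k-j}$.

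Substituting this into the expression
\[\chi_{\mathcal{T}_n}(t) = \sum_{k=1}^n \bigl(S(n,k)+n \cdot S(n-1,k)\bigr) \prod_{i=1}^k (t-(2i-1))\]
from the preceding proposition and swapping the order of the two summations, the coefficient of $t^j$ is exactly
\[\sum_{k=j}^n (-1)^{k-j}\bigl(S(n,k)+n\cdot S(n-1,k)\bigr)\, a(k,j).\]
The sum may be started at $k=j$ because $a(k,j)=0$ for $k<j$; and when $j=0$ no extra $k=0$ term is missing, since $S(n,0)=S(n-1,0)=0$ for $n \geq 2$ (the case $n=1$ can be checked by hand).

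No step presents a real obstacle; the only subtle point is the sign bookkeeping in passing between $\prod_{i=1}^k(t-(2i-1))$ and $\prod_{i=1}^k(t+(2i-1))$, since a slip there would flip the parity of the alternating sign in the final formula and destroy the interpretation of $a(k,j)$ as a count of signed permutations with $j$ odd cycles.
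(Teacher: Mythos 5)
Your proposal is correct and is exactly the computation the paper leaves implicit: the corollary is stated without proof as an immediate consequence of the preceding proposition, and the intended derivation is precisely your substitution $t\mapsto -t$ to get $\prod_{i=1}^k(t-(2i-1))=\sum_{j}(-1)^{k-j}a(k,j)t^j$ followed by interchanging the sums. Your sign bookkeeping and the remark that $a(k,j)=0$ for $k<j$ are both right, so nothing is missing.
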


\begin{remark}
Using \cref{zas}, it can be shown that the number of regions of $\mathcal{T}_n$ is
\begin{equation*}
    2\cdot (a(n)-n \cdot a(n-1))
\end{equation*}where $a(n)$ is the $n^{th}$ ordered Bell number.
It can be shown that this expression is equal to
\begin{equation*}
    \sum\limits_{k=1}^{n-1} 2^k(n-k)A(n-1,k-1)
\end{equation*}where $A(n,k)$ are the Eulerian numbers.
This is a known expression for the number of labeled threshold graphs on $n$ vertices \cite{spiro20}.
This sequence of numbers is given in the OEIS \cite{oeis} as \href{http://oeis.org/A005840}{A005840}.
\end{remark}

The list of characteristic polynomials $\chi_{\mathcal{T}_n}(t)$ and corresponding number of regions of $\mathcal{T}_n$ for $n \leq 10$ is given in \cref{tab:my_label}.

\begin{table}[H]
    \addtolength{\tabcolsep}{10pt}
    \renewcommand{\arraystretch}{1.25}
    \centering
    \begin{tabularx}{\textwidth}{|c|X|c|}
        \hline
         $n$ & \hspace{4cm}$\chi_{\mathcal{T}_n}(t)$ & {$r(\mathcal{T}_n)$} \\
        \hline
        2 & $t^2-t$ & 2\\
        \hline
        3 & $t^3-3t^2+3t-1$ & 8\\
        \hline
        4 & $t^4-6t^3+15t^2-17t+7$ & 46\\
        \hline
        5 & $t^5-10t^4+45t^3-105t^2+120t-51$ & 332\\
        \hline
        6 & $t^6-15t^5+105t^4-410t^3+900t^2-1012t+431$ & 2874\\
        \hline
        7 & $t^7-21t^6+210t^5-1225t^4+4340t^3-9058t^2+9961t-4208$ & 29024\\
        \hline
        8 & $ t^8-28t^7+378t^6-3066t^5+15855t^4-52234t^3+104433t^2-112163t+46824$ & 334982\\
        \hline
        9 & $ t^9-36t^8+630t^7-6762t^6+47817t^5-226380t^4+703815t^3-1355427t^2 +1422483t-586141$ & 4349492\\
        \hline
        10 & $t^{10}-45t^9+990t^8-13560t^7+125265t^6-801507t^5+3541125t^4-10491450t^3+19546335t^2-20068391t+8161237$ & 62749906\\
        \hline
    \end{tabularx}
    \caption{Characteristic polynomial and the number of regions of $\mathcal{T}_n$ for $n \leq 10$.}
    \label{tab:my_label}
\end{table}

We now recall the bijection between the regions of the threshold arrangement and labeled threshold graphs.

\begin{definition}
A threshold graph is defined recursively as follows:
\begin{enumerate}
    \item The empty graph is a threshold graph.
    \item A graph obtained by adding an isolated vertex to a threshold graph is a threshold graph.
    Such a vertex will be called a recessive vertex of the graph.
    \item A graph obtained by adding a vertex adjacent to all vertices of a threshold graph is a threshold graph.
    Such a vertex will be called a dominant vertex of the graph.
\end{enumerate}
\end{definition}

\begin{definition}
A labeled threshold graph is a threshold graph having $n$ vertices with vertices labeled distinctly using $[n]$.
Such graphs can also be defined using the construction above, where $n$ vertices are used in the construction, all labeled distinctly.
\end{definition}

The first vertex that is added in the construction of a labeled threshold graph can be considered as either dominant or recessive.
If two vertices are added consecutively and are of the same type, the order in which they are added does not matter.

\begin{definition}\label{tpsfdef}
A threshold pair in standard form of size $n$ is a signed permutation on $[n]$ that satisfies the following conditions:
\begin{enumerate}
    \item The signs of the first two numbers are the same.
    \item If the sign of any two consecutive numbers are the same, the first number is smaller.
\end{enumerate}
\end{definition}

\begin{example}
$\overset{-}{2}\ \overset{-}{3}\ \overset{-}{5}\ \overset{+}{1}\ \overset{+}{4}\ \overset{-}{6}$ is a threshold pair in standard form whereas $\overset{+}{2}\ \overset{-}{3}\ \overset{-}{5}\ \overset{+}{4}\ \overset{+}{1}\ \overset{-}{6}$ is not.
\end{example}

Spiro \cite{spiro20} showed that labeled threshold graphs with $n$ vertices are in bijection with threshold pairs in standard form of size $n$.
A threshold pair in standard form $(\pi,w)$ corresponds to the labeled threshold graph obtained by adding vertices in the order given by the permutation such that:
\begin{enumerate}
    \item The vertex $\pi_i$ is added as a dominant vertex if $w_i = +$.
    \item The vertex $\pi_i$ is added as a recessive vertex if $w_i = -$.
\end{enumerate}

We will use these threshold pairs in standard form to show that the regions of the threshold arrangement are in bijection with the labeled threshold graphs.

\begin{lemma}
The regions of the arrangement $\mathcal{T}_n$ are in bijection with the threshold pairs in standard form of size $n$.
\end{lemma}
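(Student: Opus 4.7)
My plan is to identify each region of $\mathcal{T}_n$ with the unique threshold pair in standard form (TPSF) among the signed permutations whose type $B$ chambers lie inside it.

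Since $\mathcal{T}_n$ is a sub-arrangement of the type $B$ arrangement $\mathcal{B}_n$, each region of $\mathcal{T}_n$ decomposes as a union of type $B$ chambers, and these are indexed by signed permutations of $[n]$. Two type $B$ chambers $(\pi,w)$ and $(\pi',w')$ lie in the same region of $\mathcal{T}_n$ exactly when a generic path between them crosses only walls of $\mathcal{B}_n \setminus \mathcal{T}_n$. The walls bounding the chamber of $(\pi,w)$ are $x_{\pi_1}=0$ together with $w_k x_{\pi_k} = w_{k+1} x_{\pi_{k+1}}$ for $1 \le k \le n-1$; the first never lies in $\mathcal{T}_n$, whereas the $k$-th of the latter takes the form $x_{\pi_k} = x_{\pi_{k+1}}$ (not in $\mathcal{T}_n$) when $w_k = w_{k+1}$ and the form $x_{\pi_k} + x_{\pi_{k+1}} = 0$ (in $\mathcal{T}_n$) when $w_k \ne w_{k+1}$. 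Hence each wall-crossing inside a $\mathcal{T}_n$-region implements one of two combinatorial operations on $(\pi,w)$: (a) flipping the sign $w_1$, or (b) transposing the entries at two consecutive positions $k, k+1$ whenever $w_k = w_{k+1}$.

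Next, I would prove that each equivalence class of signed permutations under (a) and (b) contains exactly one TPSF. For existence, given any $(\pi,w)$ I would apply (a) once if $w_1 \ne w_2$, so that the first two signs agree, and then within each maximal same-signed block use (b) to sort the labels in increasing order; the outcome satisfies both defining conditions of a TPSF. For uniqueness, observe that (b) leaves $w$ unchanged while (a) toggles only $w_1$. So two equivalent TPSFs $(\pi,w)$ and $(\pi',w')$ either satisfy $w = w'$, in which case the sortedness condition inside each block forces $\pi = \pi'$, or satisfy $w'_1 = -w_1$ with $w'_k = w_k$ for $k \ge 2$, which combined with the requirement $w_1 = w_2$ and $w'_1 = w'_2$ yields the contradiction $w_1 = -w_1$.

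The main obstacle is the geometric step in the second paragraph: justifying that the moves (a) and (b) generate the full wall-crossing equivalence of type $B$ chambers within a given region of $\mathcal{T}_n$. This reduces to the standard fact that chambers of a real arrangement lying inside the same region of a coarser sub-arrangement can be joined by a generic path crossing only walls outside the sub-arrangement, one at a time, and each such crossing is precisely a move (a) or (b). Once this is in place, the combinatorial normalization and uniqueness analysis above completes the bijection.
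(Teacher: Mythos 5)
Your proposal is correct in outline but takes a genuinely different route from the paper. The paper's proof is direct: to each threshold pair in standard form it explicitly assigns the sign pattern of the quantities $x_i+x_j$, exhibits a realizing point with prescribed signs and increasing absolute values, and, for the converse, reads a signed permutation off a generic point of a region and normalizes it (reordering maximal same-sign strings to ascending order, flipping the first sign if necessary) to land on a threshold pair in standard form. You instead exploit the containment $\mathcal{T}_n\subseteq\mathcal{B}_n$ and a wall-crossing argument: regions of $\mathcal{T}_n$ are the equivalence classes of type $B$ chambers under the moves obtained by crossing walls of $\mathcal{B}_n$ not in $\mathcal{T}_n$, and you show each class contains a unique threshold pair in standard form. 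Your approach explains conceptually why exactly these two normalization operations (sorting within same-sign blocks, toggling $w_1$) arise --- they are precisely the non-threshold walls --- whereas in the paper the normalization appears unmotivated; the cost is the appeal to the chamber decomposition and the generic-path fact, which you correctly isolate as the geometric input.

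One step in your uniqueness argument needs to be made explicit. In the case $w=w'$, the assertion that ``the sortedness condition inside each block forces $\pi=\pi'$'' presupposes that the two threshold pairs have the same \emph{set} of entries in each maximal sign-block, and this is not automatic from $w=w'$ alone. It does hold, because the moves preserve the partition of $[n]$ into block contents: a move (b) only permutes entries within a maximal run of the current sign vector, and starting from a threshold pair in standard form (where $w_1=w_2$) each application of (a) either isolates position $1$ as its own run or re-merges it into the first block, so the runs of every intermediate sign vector refine the original block structure and no entry ever migrates between blocks. With that observation inserted the argument is complete, and its level of rigor is then comparable to the paper's own proof, which leaves injectivity as ``it can be checked.''
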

\begin{proof}
Given a threshold pair in standard form $(\pi,w)$, we associate the region of $\mathcal{T}_n$ where for any $1 \leq i < j \leq n$, we have:
\begin{enumerate}
    \item $x_{\pi_i} + x_{\pi_j} > 0$ if $w_j = +$.
    \item $x_{\pi_i} + x_{\pi_j} < 0$ if $w_j = -$.
\end{enumerate}
This does correspond to a region since a point $(a_1,a_2,\ldots,a_n)$ in $\mathbb{R}^n$ with $a_i$ nonzero, sign of $a_i$ being $w_i$, and $|a_{\pi_{i}}| < |a_{\pi_{i+1}}|$ for all $i \in [n-1]$ satisfies the inequalities of the threshold region.
It can be checked that different threshold pairs in standard form are associated with different threshold regions.

Conversely, given a threshold region $R$, it is possible to find a point $(a_1,a_2,\ldots,a_n)$ in the region with all coordinates nonzero and having distinct absolute values.
Let $b_i$ denote the absolute value of $a_i$ and $w'_i$ be the sign of $a_i$ for all $i \in [n]$.
Let $\pi'_1\pi'_2\cdots\pi'_n$ be the permutation on $[n]$ such that $0 < b_{\pi'_1} < b_{\pi'_2} < \cdots < b_{\pi'_n}$.
Let $\pi'$ be the signed permutation $\pi'_1\pi'_2\cdots\pi'_n$ where $\pi'_i$ is assigned the sign $w'_{\pi_i}$.
In fact, we can assume $w'_1=w'_2$.

If $w'_1 \neq w'_2$, consider the point $(c_1,c_2,\ldots,c_n)$ where $c_i=a_i$ if $i \neq \pi'_1$ and $c_{\pi'_1}=-a_{\pi'_1}$.
We claim that this point is also in $R$ by showing that $a_i+a_j>0$ if and only if $c_i+c_j>0$ for any $1 \leq i < j \leq n$.
This is clear if neither $i$ nor $j$ is $\pi'_1$.
The number $a_{\pi'_1}$ has the least absolute value, hence for any $j \neq \pi'_1$, $a_j$ does not lie between $-a_{\pi'_a}$ and $a_{\pi'_1}$, i.e., between $c_{\pi'_1}$ and $a_{\pi'_1}$.
Consequently for any $j \neq \pi'_1$, $a_j>-a_{\pi'_1}$ if and only if $c_j>-c_{\pi'_1}$.
So $(c_1,c_2,\ldots,c_n)$ lies in $R$ and the signed permutation constructed as described above has the first two signs equal.

The threshold pair in standard form obtained by reordering all maximal strings of numbers of the same sign in $(\pi',w')$ to ascending order is the one associated with this region.
Let $(\pi,w)$ be this threshold pair in standard from associated to $R$.

Let $1 \leq i < j \leq n$.
If $w_j=+$, this means that $a_{\pi_j}>0$ and hence $b_{\pi_j}=a_{\pi_j}$.
If $w_i=+$, i.e., $a_{\pi_i}>0$, we automatically get $a_{\pi_j} > -a_{\pi_i}$.
If $w_i=-$, i.e., $a_{\pi_i}<0$, we must have $b_{\pi_j}>b_{\pi_i}$ since $\pi_i$ and $\pi_j$ have different signs and hence have the same relative position in $\pi$ as they did in $\pi'$.
In any case, we have $a_{\pi_i}+a_{\pi_j}>0$.
This means that $R$ should lie in the half-space $x_{\pi_i}+x_{\pi_j}>0$.
Similarly, if $w_j=-$, we can show that $R$ lies in the half-space $x_{\pi_i}+x_{\pi_j}<0$.
This shows that $R$ is the same region as the one associated to $(\pi,w)$ as described in the first paragraph.
Hence the two maps described above are inverses.
\end{proof}

\begin{remark}
Combining the bijections described above, we obtain a bijection between regions of $\mathcal{T}_n$ and labeled threshold graphs on $n$ vertices where, for any $1 \leq i < j \leq n$, there is an edge between the vertices $i$ and $j$ in the labeled threshold graph if and only if $x_i+x_j>0$ in the corresponding region.
\end{remark}

\section{Threshold permutations}\label{tperms}
As mentioned in \cref{intro}, we will be using different signed permutations, which we call `threshold permutations', to represent labeled threshold graphs.
Though it would be more accurate to called them `threshold signed permutations', we drop the term `signed' for convenience.
These are defined as follows:

\begin{definition}\label{tpermdef}
A threshold permutation of size $n$ is a signed permutation on $[n]$ that satisfies the following conditions:
\begin{enumerate}
    \item The first number is less than the second.
    \item If the first number is $1$, it is assigned a $-$ sign.
    \item If the first number is not $1$, the second number is assigned a $-$ sign.
    \item Any number that is assigned a $+$ sign is larger than the number before it.
\end{enumerate}
\end{definition}

We show that threshold permutations correspond to labeled threshold graphs by exhibiting a bijection between them and threshold pairs in standard form.
Given a threshold pair in standard form $(\pi,w)$, we associate a threshold permutation with the same underlying permutation as follows:
\begin{enumerate}
    \item For every $\pi_i$ where $i > 2$, we assign the sign $+$ if $w_{i-1}=w_i$ and a $-$ sign otherwise.
    \item If $\pi_1 \neq 1$, the sign of $\pi_1$ remains the same and a $-$ sign is assigned to $\pi_2$.
    \item If $\pi_1 =1$, $\pi_2$ is assigned the sign $w_1$ and $\pi_1=1$ is assigned a $-$ sign.
\end{enumerate}

\begin{example}\label{ex2}
\begin{enumerate}
    \item The threshold pair in standard form given by $\overset{-}{2}\ \overset{-}{3}\ \overset{-}{5}\ \overset{+}{1}\ \overset{+}{4}\ \overset{-}{6}$ has corresponding threshold permutation $\overset{-} 2 \  \overset{-} 3\  \overset{+} 5\  \overset{-} 1\  \overset{+} 4\  \overset{-} 6$.
    \item The threshold pair in standard form given by $\overset{+}{1}\ \overset{+}{3}\ \overset{-}{2}\ \overset{-}{5}\ \overset{+}{4}\ \overset{+}{6}\ \overset{+}{7}$ has corresponding threshold permutation $\overset{-} 1\  \overset{+} 3\  \overset{-} 2\  \overset{+} 5\  \overset{-} 4\  \overset{+} 6\  \overset{+} 7$.
\end{enumerate}
\end{example}

Given a threshold permutation $\pi_1\pi_2\cdots\pi_n$ where the sign assigned to $\pi_i$ is $w_i$, we can obtain the threshold pair in standard form associated with it, having the same underlying permutation, as follows:

Let $j_1 < j_2 < j_3 < \cdots$ be those $i \in [n]$ such that $w_i = -$ and $i > 2$.
\begin{enumerate}
    \item If $\pi_1 \neq 1$, the sign $w_1$ is assigned to $\pi_i$ for $i < j_1$, $j_2 \leq  i < j_3$, $j_4 \leq i < j_5$, $\ldots$
    All other terms of the permutation are assigned the opposite of $w_1$.
    \item If $\pi_1 = 1$, the sign $w_2$ is assigned to $\pi_i$ for $i < j_1$, $j_2 \leq  i < j_3$, $j_4 \leq i < j_5$, $\ldots$
    All other terms of the permutation are assigned the opposite of $w_2$.
\end{enumerate}
It can be checked that the above procedure applied to the threshold permutations in \Cref{ex2} produces the original threshold pairs in standard form, and more generally that it reverses the process defined before \Cref{ex2}.
So, the regions of the threshold arrangement and threshold graphs are both in bijection with threshold permutations.

We can compartmentalize a threshold permutation just as done for usual signed permutations in \Cref{typb} and consider the number of odd cycles it has.

\begin{example}
The threshold permutation
\begin{equation*}
    \overset{-} 2 \  \overset{-} 3\  \overset{+} 5\  \overset{-} 1\  \overset{+} 4\  \overset{-} 6
\end{equation*}is split into compartments as
\begin{equation*}
    \textcolor{blue}{|} \  \overset{-} 2 \  \overset{-} 3\  \overset{+} 5\  \overset{-} 1 \  \textcolor{blue}{|} \  \overset{+} 4\  \textcolor{blue}{|} \  \overset{-} 6 \  \textcolor{blue}{|}.
\end{equation*}Hence this threshold permutation has $2$ odd cycles.
\end{example}

We can now state the main theorem of this section.

\begin{theorem}\label{thm1}
The absolute value of the coefficient of $t^j$ in the polynomial $\chi_{\mathcal{T}_n}(t)$ is the number of threshold permutations of size $n$ with exactly $j$ odd cycles.
\end{theorem}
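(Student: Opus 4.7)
My approach is to reduce the theorem, via Corollary \ref{threshcoeff}, to the combinatorial identity
\begin{equation*}
T'(n,j) \;=\; \sum_{k=j}^{n} (-1)^{n-k}\bigl(S(n,k)+n\,S(n-1,k)\bigr)\,a(k,j),
\end{equation*}
where $T'(n,j)$ denotes the number of threshold permutations of size $n$ with $j$ odd cycles. Writing $P(n,k):=S(n,k)+n\,S(n-1,k)$, the factor $P(n,k)$ enumerates set partitions of $[n]$ into $k$ ordinary blocks together with an optional distinguished extra singleton $d$ (absent for the $S(n,k)$ summand, an element of $[n]$ for the $n\,S(n-1,k)$ summand). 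Combined with the interpretation from \cref{typb} of $a(k,j)$ as the number of signed permutations on $[k]$ with $j$ odd cycles, each summand enumerates triples $(\mathcal{P},d,\sigma)$ of this type, carrying the sign $(-1)^{n-k}$.

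I would then expand each triple into a signed permutation $\bar\sigma$ of $[n]$: for each signed cycle of $\sigma$ involving blocks $B_{i_1},\dots,B_{i_m}$ in cyclic order, concatenate the elements of each block in ascending order along the cycle, preserving signs in a canonical fashion; when $d$ is present, place the singleton $\{d\}$ at the head of $\bar\sigma$ with sign $-$, in line with the convention of \cref{tpermdef}(2) for the case $\pi_1=1$. By construction the compartments of $\bar\sigma$, in the sense of \cref{typb}, correspond bijectively to the cycles of $\sigma$ (together with $\{d\}$), and odd cycles of $\sigma$ correspond to odd compartments of $\bar\sigma$. The expansion $\bar\sigma$ need not itself satisfy the conditions (1), (3), (4) of \cref{tpermdef}, so the next step is to construct a sign-reversing involution on the set of triples whose expansion is not a threshold permutation: locate the first position where a threshold condition fails, and perform a canonical merge or split of two blocks of $\mathcal{P}$ (or a movement of $d$ into or out of an adjacent block) that changes $k$ by $\pm 1$, thereby reversing the sign $(-1)^{n-k}$.

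The fixed points of this involution are exactly the triples whose expansion is a threshold permutation with $j$ odd cycles, so after cancellation the alternating sum collapses to $T'(n,j)$. The main obstacle is constructing the involution precisely. Conditions (1)-(3) of \cref{tpermdef} place asymmetric constraints on positions $1$ and $2$, depending on whether $\pi_1=1$, so the role of the distinguished singleton $d$ must be coordinated carefully with these conditions, and the ``first violation'' rule must be chosen so that merging and splitting are inverse operations globally. Verifying that this map is genuinely an involution with the stated fixed-point set requires a careful case analysis, particularly around the leftmost positions of $\bar\sigma$ where the threshold conditions behave differently from the interior.
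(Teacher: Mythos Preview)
Your strategy is the right one and is close in spirit to the paper's, but the single unified involution you sketch has a concrete sign problem. In your setup both the $d$-absent triples (partition of $[n]$ into $k$ blocks) and the $d$-present triples (partition of $[n]\setminus\{d\}$ into $k$ blocks) carry the sign $(-1)^{n-k}$. A ``movement of $d$ into an adjacent block'' absorbs $d$ into one of the existing $k$ blocks, producing an ordinary partition of $[n]$ still with $k$ blocks; $k$ does not change and the sign does not flip. Likewise pulling an element out of a non-singleton block and declaring it to be $d$ leaves $k$ fixed. So this piece of the map is not sign-reversing, and the cancellation you want does not go through as stated. Your placement of $d$ at the head of $\bar\sigma$ with sign $-$ is also not quite right: the reference to Definition~\ref{tpermdef}(2) only makes sense when $d=1$, whereas $d$ ranges over all of $[n]$.

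The paper sidesteps both issues by separating the two summands. It first introduces \emph{normal permutations} (signed permutations satisfying only condition (4) of Definition~\ref{tpermdef}) and proves $N(n,j)=\sum_{k}(-1)^{n-k}S(n,k)\,a(k,j)$ via an honest merge/split involution on ``representations of signed ordered partitions''; this handles the $S(n,k)$ term alone and its fixed points are all normal permutations, not just threshold ones. The $n\,S(n-1,k)$ term is then treated not by involution but by a direct bijection (Lemma~\ref{thm2}): the normal permutations on $[n]$ with $j$ odd cycles that fail one of conditions (1)--(3) are matched with pairs $(b,\pi)$, $b\in[n]$, $\pi$ normal on $[n-1]$, essentially by reading $b$ as the first letter (with a small case-dependent sign adjustment on the second letter, not simply prepending $\overset{-}{b}$). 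This gives $T'(n,j)=N(n,j)-n\,N(n-1,j)$, and the plus sign in $S(n,k)+nS(n-1,k)$ appears because the subtraction at the count level combines with the parity shift $(-1)^{(n-1)-k}=-(-1)^{n-k}$. If you want a single involution, you would need to reinterpret the $d$-move as creating or destroying a singleton block $\{d\}$ (which does change $k$ by $\pm1$) and then check that this toggle is compatible with the interior merge/split; the paper's two-stage argument avoids that coordination entirely.
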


To prove this theorem, we will count the threshold permutations subject to odd cycle number and show that this matches with the coefficients of the characteristic polynomial.
This will be done as follows:
\begin{enumerate}
    \item In \cref{nperms}, we count a larger class of signed permutations,  which we call `normal permutations', subject to the number of odd cycles.
    \item In \cref{thm1prf}, we show that normal permutations on $[n-1]$ can be used to count those normal permutations on $[n]$ that are not threshold permutations (subject to the number of odd cycles).
    This gives us an expression for the number of threshold permutations with a given number of odd cycles and hence proves \cref{thm1}.
\end{enumerate}

\subsection{Normal permutations}\label{nperms}
\begin{definition}
A normal permutation is a signed permutation such that any number assigned a $+$ sign is larger than the one before it.
We represent the number of normal permutations on $[n]$ with $j$ odd cycles as $N(n,j)$.
\end{definition}

\begin{example}
The signed permutations given by $\overset{-} 3\  \overset{-} 1\  \overset{+} 5\  \overset{+} 7\  \overset{-} 2\  \overset{+} 4\  \overset{+} 6$ is a normal permutation (with $1$ odd cycle) whereas $\overset{-} 3\  \overset{+} 7\  \overset{+} 1\  \overset{-} 5\  \overset{-} 4\  \overset{+} 6\  \overset{-} 2$ is not.
\end{example}

\begin{remark}
Note that all threshold permutations are normal permutations.
\end{remark}

\begin{proposition}\label{numbernormal}
The number of normal permutations on $[n]$ having $j$ odd cycles is
\begin{equation}\label{eq1}
    N(n,j) = \sum_{k=j}^n (-1)^{n-k}S(n,k) \cdot a(k,j)
\end{equation}where $a(k,j)$ is defined in \Cref{typb} as the number of signed permutations of $[k]$ with $j$ odd cycles.
\end{proposition}

To prove \Cref{numbernormal}, we define a sign-reversing involution on pairs of the form $(P,(\pi,w))$ where $P$ is a partition of $[n]$ with $k$ blocks and $(\pi,w)$ is a signed permutation of size $k$ with $j$ odd cycles.
If $P=B_1/B_2/\cdots/B_k$, where $\operatorname{min}(B_1) < \operatorname{min}(B_2) < \cdots < \operatorname{min}(B_k)$ and $(\pi,w)=(\pi_1\pi_2\cdots\pi_k,w_1w_2\cdots w_k)$, we represent the pair $(P,(\pi,w))$ as
\begin{equation*}
    w_1\ B_{\pi_1}\ w_2\ B_{\pi_2}\ \cdots\ w_k\ B_{\pi_k}.
\end{equation*}

\begin{example}
The pair given by
\begin{equation*}
    (\{1,3\}/\{2,6\}/\{4\}/\{5\}/\{7\}, \overset{-}2 \  \overset{-}4 \  \overset{+}1 \  \overset{-}5 \  \overset{-}3)
\end{equation*}has representation
\begin{equation*}
    -\  \{2,6\}\  -\  \{5\}\  +\  \{1,3\}\  -\  \{7\}\  -\  \{4\}.
\end{equation*}
\end{example}

The representation of $(P,(\pi,w))$ is said to have size $n$ if $P$ is a partition of $[n]$, $k$ blocks if $P$ has $k$ blocks, and $j$ odd cycles if $(\pi,w)$ has $j$ odd cycles.
A \textit{part} of a representation is a portion of the representation, after deleting the first sign, that is between consecutive $-$ signs, before the first $-$ sign, or after that last $-$ sign.

\begin{example}
The representation given by
\begin{equation*}
    -\  \{2,6\}\  -\  \{5\}\  +\  \{1,3\}\  -\  \{7\}\  -\  \{4\}
\end{equation*}is of size $7$, has $5$ blocks, and has $0$ odd cycles.
The parts of the representation are
\begin{equation*}
    \{2,6\},\ \{5\}\  +\  \{1,3\},\ \{7\}\ \text{and}\ \{4\}.
\end{equation*}
\end{example}

\begin{definition}
A normal representation is one in which each part is of the form
\begin{equation*}
    \{a_1\}\ +\ \{a_2\}\ +\ \cdots\ +\ \{a_m\}
\end{equation*}where $a_1<a_2<\cdots<a_m$.
\end{definition}

It is clear that normal representations correspond to those pairs $(P,(\pi,w))$ where $P=\{1\}/\{2\}/\cdots/\{n\}$ and $(\pi,w)$ is a normal permutation.
Hence, $N(n,j)$ is the number of normal representations of size $n$ with $j$ odd cycles.

\begin{proof}[Proof of \Cref{numbernormal}]
We have to show that the summation given in \eqref{eq1} counts the number of normal representations of size $n$ with $j$ odd cycles.
To do so, we rewrite \eqref{eq1} as
\begin{equation*}
    N(n,j)\ + \sum_{k \not\equiv n (mod\ 2)} S(n,k) \cdot a(k,j)\ = \sum_{k \equiv n (mod\ 2)} S(n,k) \cdot a(k,j).
\end{equation*}
The term on the right hand side is the number of representations of size $n$ with $j$ odd cycles whose number of blocks has the same parity as $n$.
Similarly, the term on the left hand side after $N(n,j)$ is the number of representations of size $n$ with $j$ odd cycles whose number of blocks has the opposite parity as $n$.
Since normal representations have $n$ blocks, they are representations of the type counted by the term on the right hand side.
Hence, we can prove our result by obtaining a sign reversing involution (with respect to number of blocks) on the set of representations of size $n$ with $j$ odd cycles, that are not normal.

We define such an involution as follows:
For any representation that is not normal, find the first part that is not of the form
\begin{equation*}
    \{a_1\}\   +\   \{a_2\}\   +\   \cdots\   +\  \{a_m\}
\end{equation*}where $a_1 < a_2 < \cdots < a_m$.
Suppose the numbers in the blocks of this part are $a_1 < a_2 < \cdots < a_m$ and $a_i$ is the first number such that one of the following holds:
\begin{enumerate}
    \item $a_i$ is in a block $B$ of size greater than 1, or
    \item $\{a_i\}$ is a block but there is some element in the block $B'$ before it that is larger than $a_i$.
\end{enumerate}
In the first case, we split $B$ in the representation as $B \setminus \{a_i\} + \{a_i\}$.
In the second case, we put $a_i$ into $B'$, i.e., replace $B' + \{a_i\}$ by $B'\cup \{a_i\}$.

\begin{example}
The representation of size $7$ with $2$ odd cycles given by
\begin{equation*}
    +\  \{1\} \  + \  \{3\} \  - \  \{2\}\  +\  \{5\}\  +\  \{4\}\  -\  \{6,7\}
\end{equation*}is matched with the representation of size $7$ with $2$ odd cycles given by
\begin{equation*}
    +\  \{1\} \  + \  \{3\} \  - \  \{2\}\  +\  \{4,5\}\  -\  \{6,7\}.
\end{equation*}
\end{example}

It can be shown that this involution satisfies our requirements.
Hence the summation given in \eqref{eq1} is the number of normal permutations of size $n$ with $j$ odd cycles.
\end{proof}

\subsection{Proof of Theorem \ref{thm1}}\label{thm1prf}

We now show that normal permutations on $[n]$ having $j$ odd cycles that are not threshold permutations are in bijection with pairs of the form $(b,\pi)$ where $b$ is a number in $[n]$ and $\pi$ is a normal permutation on $[n-1]$ with $j$ odd cycles.
This will then give us an expression for the number of threshold permutations on $[n]$ having $j$ odd cycles as $N(n,j) - n \cdot N(n-1,j)$.
Using the expression for $N(n,j)$ given by \eqref{eq1}, we get
\begin{equation*}
    N(n,j) - n \cdot N(n-1,j) = \sum_{k=j}^n (-1)^{n-k} (S(n,k) + n \cdot S(n-1,k))\cdot a(k,j).
\end{equation*}From this and the expression for the coefficients of the characteristic polynomial given in \cref{threshcoeff}, \cref{thm1} follows.
So it only remains to prove the following lemma.

\begin{lemma}\label{thm2}
Pairs of the form $(b,\pi)$ where $b$ is a number in $[n]$ and $\pi$ is a normal permutation on $[n-1]$ with $j$ odd cycles are in bijection with normal permutations on $[n]$ with $j$ odd cycles that are not threshold permutations.
\end{lemma}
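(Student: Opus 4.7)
The plan is to build the bijection by deleting the first entry of a non-threshold normal permutation and encoding the sign information that would otherwise be lost into the new leading sign. Given a non-threshold normal permutation $\sigma = \sigma_1 \cdots \sigma_n$ on $[n]$ with signs $w_1, \dots, w_n$, I would set $b := \sigma_1$ and let $\pi$ be the signed permutation on $[n-1]$ obtained from $\sigma_2 \cdots \sigma_n$ by the unique order-preserving relabeling, equipped with signs $v_1, \dots, v_{n-1}$ defined by $v_1 := w_1 w_2$ (the product of the two leading signs of $\sigma$) and $v_i := w_{i+1}$ for $i \geq 2$. For the reverse direction, with $\widetilde\pi$ denoting $\pi$ relabeled back into $[n]\setminus\{b\}$ order-preservingly, set $\sigma = b\,\widetilde\pi_1 \cdots \widetilde\pi_{n-1}$ and recover $w_1, w_2$ as follows: if $b = 1$, take $w_1 := +$ and $w_2 := v_1$; if $b \geq 2$, take $w_2 := +$ when $\widetilde\pi_1 > b$ and $w_2 := -$ otherwise, and $w_1 := v_1 \cdot w_2$.

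I would then verify that $\pi$ is a normal permutation on $[n-1]$ and that the $\sigma$ reconstructed from any $(b, \pi)$ is a non-threshold normal permutation. Normality of $\pi$ is routine: a $+$ sign at position $i \geq 2$ of $\pi$ inherits the corresponding strict inequality from $\sigma$ at position $i+1$ via the order-preserving relabeling, and $v_1$ carries no constraint. For $\sigma$: when $b = 1$, the choice $w_1 = +$ with $\sigma_1 = 1$ violates condition (2) of \cref{tpermdef}; when $b \geq 2$, the choice of $w_2$ keeps $\sigma$ normal at position 2, and $\sigma$ violates condition (1) (when $\widetilde\pi_1 < b$, so $\sigma_1 > \sigma_2$) or condition (3) (when $\widetilde\pi_1 > b$, giving $w_2 = +$ with $\sigma_1 \neq 1$). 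The two maps are visibly inverse to each other.

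The main obstacle is checking that the bijection preserves the number of odd cycles. Because $1$ is the global minimum of both $\sigma$ and $\pi$, if $l$ is the position of $1$ in $\sigma$ then the first compartment of $\sigma$ is $\sigma_1 \cdots \sigma_l$, and all later compartments of $\sigma$ correspond to the later compartments of $\pi$ under the relabeling with matching signs via $v_i = w_{i+1}$. It thus suffices to check that the parity of negative signs in the first compartment is preserved. When $b = 1$ we have $l = 1$, so the first compartment of $\sigma$ is just $\{1\}$ with sign $+$ (contributing zero negatives), and the first compartment of $\pi$ corresponds exactly to the second compartment of $\sigma$ with matching signs via $v_1 = w_2$. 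When $b \geq 2$, the first compartment of $\sigma$ has signs $w_1, w_2, w_3, \dots, w_l$ while that of $\pi$ has signs $w_1 w_2, w_3, \dots, w_l$; the change in the number of negatives is $[w_1 = -] + [w_2 = -] - [w_1 w_2 = -]$, which is even for each of the four $(w_1, w_2)$ sign combinations. Hence the odd cycle counts match, completing the argument.
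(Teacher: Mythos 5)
Your bijection is exactly the inverse of the paper's: the rule $v_1 = w_1 w_2$ together with recovering $w_2$ by comparing $b$ with $\widetilde\pi_1$ (and forcing $w_1=+$ when $b=1$) reproduces the paper's three insertion cases $1<b<a_1$, $b>a_1$, and $b=1$, and your parity identity $[w_1=-]+[w_2=-]\equiv[w_1w_2=-] \pmod 2$ is precisely the paper's observation that the number of negative signs in the first compartment keeps its parity while all later compartments are untouched. The argument is correct and essentially the same as the paper's, just presented as a deletion with a unified sign formula rather than an insertion with a case split.
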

\begin{proof}
For any such pair $(b,\pi)$, we will call $b$ the special number and consider $\pi$ as a normal permutation on $[n] \setminus \{b\}$ using the order-preserving bijection from $[n-1]$ to $[n] \setminus \{b\}$.

We now match a given such pair with some normal permutation of size $n$ with $j$ odd cycles.
Let the special number of the given pair be $b$ and the first number in the normal permutation on $[n] \setminus \{b\}$ be $a_1$.
We now consider several cases:
\begin{enumerate}
    \item If $b > a_1$, we match the normal permutation of size $n$ which is obtained by inserting $b$ before $a_1$, assigning it the opposite sign as $a_1$, and then assigning $a_1$ a $-$ sign.
    That is, if the normal permutation of size $n-1$ is
    \begin{equation*}
        \overset{\pm}{a_1}\   \cdots\   \text{ and $\  b > a_1$,}
    \end{equation*}then we match this the normal permutation of size $n$ given by
    \begin{equation*}
        \overset{\mp}{b}\   \overset{-}{a_1}\   \cdots.
    \end{equation*}This normal permutation also has $j$ odd cycles since the parity of the number of $-$ signs in the first compartment remains the same and other compartments do not change.
    \item If $b < a_1$ and $b \neq 1$, we match the normal permutation of size $n$ which is obtained by inserting $b$ before $a_1$, assigning it the same sign as $a_1$, and then assigning $a_1$ a $+$ sign.
    That is, if the normal permutation of size $n-1$ is
    \begin{equation*}
        \overset{\pm}{a_1}\   \cdots\   \text{ and $\ 1 < b < a_1$,}
    \end{equation*}then we match this with the normal permutation of size $n$ given by
    \begin{equation*}
        \overset{\pm}{b} \   \overset{+}{a_1}\   \cdots.
    \end{equation*}This normal permutation also has $j$ odd cycles since $b \neq 1$ and the number of $-$ signs in each compartment remains unchanged.
    \item If $b=1$, we insert $\overset{+}{1}$ at the beginning of the normal permutation of size $n-1$.
    That is, if the normal permutation of size $n-1$ is
    \begin{equation*}
        \overset{\pm}{a_1}\   \cdots\   \text{ and $\  b = 1$,}
    \end{equation*}then we match this with the normal permutation of size $n$ given by
    \begin{equation*}
        \overset{+}{1}\  \overset{\pm}{a_1}\   \cdots.
    \end{equation*}This normal permutation also has $j$ odd cycles since we have just a compartment containing only $\overset{+}{1}$.
\end{enumerate}

\begin{example}
The pair having special number $3$ and normal permutation of size $6$ given by
\begin{equation*}
    \overset{+} 1\  \overset{-} 5\  \overset{+} 7\  \overset{-} 2\  \overset{+} 4\  \overset{-} 6
\end{equation*}
is matched with the normal permutation of size $7$ given by
\begin{equation*}
    \overset{-} 3\  \overset{-} 1\  \overset{-} 5\  \overset{+} 7\  \overset{-} 2\  \overset{+} 4\  \overset{-} 6.
\end{equation*}It can be checked that both have $1$ odd cycle.
\end{example}

The matching described above gives a bijection between pairs of the type mentioned in the statement of the lemma and normal permutations on $[n]$ with $j$ odd cycles of the following types:
\begin{enumerate}
    \item The first number is greater than the second.
    \item The first number is not $1$ and the second is assigned a $+$ sign.
    \item The first number is $1$ which is assigned a $+$ sign.
\end{enumerate}But these are precisely those normal permutations that are not threshold permutations.
\end{proof}

The threshold pairs in standard form of size $2$ and $3$, corresponding threshold permutations, and number of odd cycles are given in \cref{tab:my_label2} and \cref{tab:my_label3} respectively.
Using \Cref{thm1} and the contents of these tables, we get $\chi_{\mathcal{T}_2}(t)=t^2-t$ and $\chi_{\mathcal{T}_3}(t)=t^3-3t^2+3t-1$, which agrees with the expressions in \Cref{tab:my_label}.
\FloatBarrier
\begin{table}[H]
    \addtolength{\tabcolsep}{10pt}
    \renewcommand{\arraystretch}{1.25}
    \centering
    \begin{tabular}{|M{3cm}|M{4cm}|M{2.5cm}|}
        \hline
        Threshold pair in standard form& Corresponding threshold permutation & Number of odd cycles\\
        \hline
        $\overset{-}{1}\ \overset{-}{2}$ & $\overset{-}{1}\ \overset{-}{2}$ & 2\\
        \hline
        $\overset{+}{1}\ \overset{+}{2}$ & $\overset{-}{1}\ \overset{+}{2}$ & 1\\
        \hline
    \end{tabular}
    \caption{Threshold pairs in standard form of size $2$, threshold permutations and number of odd cycles.}
    \label{tab:my_label2}
\end{table}
\FloatBarrier
\begin{table}[H]
    \addtolength{\tabcolsep}{10pt}
    \renewcommand{\arraystretch}{1.25}
    \centering
    \begin{tabular}{|M{3cm}|M{4cm}|M{2.5cm}|}
        \hline
        Threshold pair in standard form& Corresponding threshold permutation & Number of odd cycles\\
        \hline
        $\overset{-}{1}\ \overset{-}{2}\ \overset{+}{3}$ & $\overset{-}{1}\ \overset{-}{2}\ \overset{-}{3}$ & 3\\
        \hline
        $\overset{+}{1}\ \overset{+}{2}\ \overset{-}{3}$ & $\overset{-}{1}\ \overset{+}{2}\ \overset{-}{3}$ & 2\\
        \hline
        $\overset{+}{1}\ \overset{+}{3}\ \overset{-}{2}$ & $\overset{-}{1}\ \overset{+}{3}\ \overset{-}{2}$ & 2\\
        \hline
        $\overset{-}{1}\ \overset{-}{2}\ \overset{-}{3}$ & $\overset{-}{1}\ \overset{-}{2}\ \overset{+}{3}$ & 2\\
        \hline
        $\overset{-}{1}\ \overset{-}{3}\ \overset{+}{2}$ & $\overset{-}{1}\ \overset{-}{3}\ \overset{-}{2}$ & 1\\
        \hline
        $\overset{-}{2}\ \overset{-}{3}\ \overset{+}{1}$ & $\overset{-}{2}\ \overset{-}{3}\ \overset{-}{1}$ & 1\\
        \hline
        $\overset{+}{1}\ \overset{+}{2}\ \overset{+}{3}$ & $\overset{-}{1}\ \overset{+}{2}\ \overset{+}{3}$ & 1\\
        \hline
        $\overset{+}{2}\ \overset{+}{3}\ \overset{-}{1}$ & $\overset{+}{2}\ \overset{-}{3}\ \overset{-}{1}$ & 0\\
        \hline
    \end{tabular}
    \caption{Threshold pairs in standard form of size $3$, threshold permutations and number of odd cycles.}
    \label{tab:my_label3}
\end{table}

\section{Threshold graphs}\label{tgraphs}
We now use the bijection between threshold permutations and labeled threshold graphs to obtain a corresponding statistic on the graphs.
Before doing so, we make a few definitions and conventions regarding the construction of labeled threshold graphs.

\begin{definition}
The set of numbers forming a maximal string having the same sign in a threshold pair in standard form is called a block.
If the sign of the numbers in a block is positive, it is called a dominant block and if it is negative, it is called a recessive block.
\end{definition}
We order the blocks according to the order in which they appear in the permutation.
We can define blocks for any labeled threshold graph using the bijection with threshold pairs in standard form.
By the definition of a threshold pair in standard form, the first block always has size larger than $1$.

\begin{example}
The blocks in the threshold pair in standard form given by $\overset{-}{2}\ \overset{-}{3}\ \overset{-}{5}\ \overset{+}{1}\ \overset{+}{4}\ \overset{-}{6}$ are $\{2,3,5\}$, $\{1,4\}$ and $\{6\}$.
\end{example}

Recall that the labeled threshold graph corresponding to a threshold pair in standard form $(\pi,w)$ is obtained by adding vertices in the order given by the permutation such that:
\begin{enumerate}
    \item The vertex $\pi_i$ is added as a dominant vertex if $w_i = +$.
    \item The vertex $\pi_i$ is added as a recessive vertex if $w_i = -$.
\end{enumerate}

As mentioned in \cref{intro}, any signed permutation corresponds to a construction of a labeled threshold graph using the same method as above.
Also, any construction of a labeled threshold graph can be represented as a signed permutation.

\begin{example}
The construction of a labeled threshold graph associated to the signed permutation on $[5]$ given by $\overset{+}{2}\overset{-}{3}\overset{+}{4}\overset{+}{1}\overset{-}{5}$ is shown in Figure \ref{tgconst}.
\begin{figure}[H]
    \centering
    \begin{tikzpicture}[scale=0.5]
    \node (2) [circle,draw=black,inner sep=2pt] at (0,0) {\small $2$};
    \node (4) at (0.75,-2) {\small \color{white} 4};
    \node at (1.5,0.5) {$\xrightarrow{\overset{-}{3}}$};
    \end{tikzpicture}
    \begin{tikzpicture}[scale=0.5]
        \node (2) [circle,draw=black,inner sep=2pt] at (0,0) {\small $2$};
        \node (3) [circle,draw=black,inner sep=2pt] at (2,0) {\small $3$};
        \node (4) at (0.75,-2) {\small \color{white} 4};
        \node at (3.5,0.5) {$\xrightarrow{\overset{+}{4}}$};
    \end{tikzpicture}
    \begin{tikzpicture}[scale=0.5]
        \node (2) [circle,draw=black,inner sep=2pt] at (0,0) {\small $2$};
        \node (3) [circle,draw=black,inner sep=2pt] at (2,0) {\small $3$};
        \node (1) [circle,draw=black,inner sep=2pt] at (1,2) {\small $4$};
        \node (4) at (0.75,-2) {\small \color{white} 4};
        \node at (3.5,0.5) {$\xrightarrow{\overset{+}{1}}$};
        \draw (3)--(1)--(2);
    \end{tikzpicture}
    \begin{tikzpicture}[scale=0.5]
        \node (2) [circle,draw=black,inner sep=2pt] at (0,0) {\small $2$};
        \node (3) [circle,draw=black,inner sep=2pt] at (2,0) {\small $3$};
        \node (1) [circle,draw=black,inner sep=2pt] at (1,2) {\small $4$};
        \draw (3)--(1)--(2);
        \node at (3.5,0.5) {$\xrightarrow{\overset{-}{5}}$};
        \node (4) [circle,draw=black,inner sep=2pt] at (1,-2) {\small $1$};
        \draw (1)--(4);
        \draw (2)--(4);
        \draw (3)--(4);
    \end{tikzpicture}
    \begin{tikzpicture}[scale=0.5]
        \node (2) [circle,draw=black,inner sep=2pt] at (0,0) {\small $2$};
        \node (3) [circle,draw=black,inner sep=2pt] at (2,0) {\small $3$};
        \node (1) [circle,draw=black,inner sep=2pt] at (1,2) {\small $4$};
        \draw (3)--(1)--(2);
        \node (4) [circle,draw=black,inner sep=2pt] at (1,-2) {\small $1$};
        \node (5) [circle,draw=black,inner sep=2pt] at (3.5,0) {\small $5$};
        \draw (1)--(4);
        \draw (2)--(4);
        \draw (3)--(4);
    \end{tikzpicture}
    \caption{Construction of threshold graph corresponding to $\overset{+}{2}\overset{-}{3}\overset{+}{4}\overset{+}{1}\overset{-}{5}$.}
    \label{tgconst}
\end{figure}
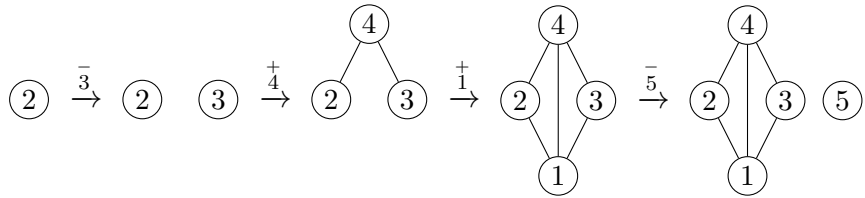
\end{example}

\begin{remark}
We make the following convention about constructions of labeled threshold graphs:
If $1$ is in the first block, it should be the first vertex added, and in this case $1$ is considered as a dominant vertex.
\end{remark}

This means that if $1$ is not the first vertex added, there should be some vertex added before it that is of different type (other than the first vertex) and when $1$ is added as the first vertex, it is considered to be a dominant vertex.

\begin{example}
The constructions $\overset{-}{2}\ \overset{+}{4}\ \overset{+}{1}\ \overset{-}{5}\ \overset{+}{3}$ and $\overset{-}{1}\ \overset{-}{5}\ \overset{-}{3}\ \overset{+}{2}\ \overset{+}{4}$ do not follow our convention.
Whereas, the constructions $\overset{+}{1}\ \overset{+}{4}\ \overset{+}{2}\ \overset{-}{5}\ \overset{+}{3}$ and $\overset{+}{1}\ \overset{-}{5}\ \overset{-}{3}\ \overset{+}{2}\ \overset{+}{4}$ do.
Any construction of the threshold graph constructed by $\overset{+}{4}\ \overset{+}{5}\ \overset{-}{2}\ \overset{+}{1}\ \overset{-}{3}$ will follow our convention since $1$ is not in the first block.
\end{example}

\begin{definition}
In a construction of a labeled threshold graph, an anchor is a vertex whose label is smaller than all vertices added after it.
\end{definition}

It is clear that the first anchor will always be $1$ and hence any construction always has an anchor.
The same threshold graph could have different anchors based on how it is constructed.

\begin{example}\label{ex4}
Consider the threshold graph corresponding to the threshold pair in standard form given by $\overset{-}{2}\ \overset{-}{3}\ \overset{-}{5}\ \overset{+}{1}\ \overset{+}{4}\ \overset{-}{6}$.
\begin{enumerate}
    \item The construction given by $\overset{+}{2}\ \overset{-}{3}\ \overset{-}{5}\ \overset{+}{1}\  \overset{+}{4}\ \overset{-}{6}$ has anchors $1,\ 4$ and $6$.
    \item The construction given by $\overset{-}{3}\ \overset{-}{2}\ \overset{-}{5}\ \overset{+}{4}\  \overset{+}{1}\ \overset{-}{6}$ has anchors $1$ and $6$.
\end{enumerate}
\end{example}

\begin{definition}
An odd anchor in a construction is  an anchor that is of one of the following forms:
\begin{enumerate}
    \item The first anchor (i.e., $1$) and dominant.
    \item An anchor that is of different type (i.e., dominant or recessive) from the previous anchor.
\end{enumerate}
\end{definition}

\begin{example}The odd anchors in the constructions given in \Cref{ex4} are $1$ and $6$ in both cases.
\begin{enumerate}
    \item In the construction $\overset{+}{6}\ \overset{-}{2}\ \overset{+}{3}\ \overset{-}{1}\ \overset{-}{5}\ \overset{+}{4}$, the anchors are $1$ and $4$ but only $4$ is an odd anchor.
    \item In the construction $\overset{+}{1}\ \overset{-}{3}\ \overset{-}{5}\ \overset{+}{2}\ \overset{+}{4}$ the anchors are $1,\ 2$ and $4$ but only $1$ is an odd anchor.
\end{enumerate}
\end{example}

\begin{lemma}\label{noa}
The number of odd anchors in any construction of a labeled threshold graph is the same, provided we follow the convention about the vertex $1$.
\end{lemma}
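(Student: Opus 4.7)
The plan is to prove invariance of the odd anchor count under two elementary moves on constructions, each of which preserves both the labeled threshold graph and the convention, and then to observe that these moves suffice to connect any two valid constructions of a given graph. The two moves are: (M1) swapping two consecutive same-type vertices, and (M2) flipping the type of the first vertex when that vertex is not $1$. Both moves preserve the graph, since the order of consecutive same-type additions does not affect any edge, and the first vertex's type cannot affect edges as no prior vertices exist. Connectivity follows by reducing any valid construction to a canonical one (essentially the threshold pair in standard form, with the sign of vertex $1$ set to $+$ when $1$ lies in the first block) via sorts within each maximal same-type run using (M1), and at most one (M2) move on the first vertex.

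The technical heart of the argument is invariance under (M1). Let the swap exchange $a$ at position $i$ with $b$ at position $i+1$, both of type $T$, and let $s$ denote the minimum label among positions exceeding $i+1$. Anchor statuses of vertices outside $\{i, i+1\}$ are unaffected. A short case check on the order of $a, b, s$ shows that the anchor sets of the two constructions differ only when $\max(a, b) < s$; in that regime, writing $m = \min(a, b)$ and $M = \max(a, b)$, the construction with $(m, M)$ at positions $(i, i+1)$ has both $m$ and $M$ as anchors, while the construction with $(M, m)$ at positions $(i, i+1)$ has only $m$ (at position $i+1$) as an anchor. So exactly one of the two constructions carries an ``extra'' anchor at position $i$, of type $T$. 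Its immediate successor is the type-$T$ anchor at position $i+1$, which is therefore not odd in the extra anchor's presence. Meanwhile the extra anchor's own odd status depends only on its previous anchor, which sits at some position $<i$; that previous anchor coincides with the previous anchor of the anchor at position $i+1$ in the other construction. Hence the extra anchor contributes to the odd count exactly the amount that the anchor at position $i+1$ contributes in the other construction, and the two odd counts agree.

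Invariance under (M2) is immediate: when the first vertex $v$ is not $1$, $v$ is never an anchor (because $1 < v$ and $1$ appears later), so $v$'s type plays no role in the odd anchor count and flipping it changes nothing. Together with the connectivity observation, this proves the lemma. The subtle step is the bookkeeping in (M1), especially matching the odd-count contribution of the extra anchor to that of the anchor it displaces in the other construction; this comparison works out cleanly because the two anchors in question share the same ``previous anchor'' and both have type $T$, and the boundary cases (for instance when $m = 1$, so $m$ has no previous anchor and its odd status is governed by the ``first anchor and dominant'' rule) fit the same pattern since $m$ is in the same position relative to the anchor chain in both constructions.
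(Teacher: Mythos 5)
Your proof is correct, but it takes a genuinely different route from the paper's. The paper argues globally: it shows that whether a block contains an odd anchor is independent of the construction (the least element of an anchor-containing block is always an anchor), that each block carries at most one odd anchor (its first anchor), and it handles the case where $1$ is added first by noting separately that $1$ is then always an odd anchor and that $2$ is an odd anchor precisely when it is recessive. You instead prove invariance of the statistic under the two elementary moves that relate constructions of the same graph --- transposing consecutive same-type vertices and flipping the type of a first vertex other than $1$ --- and your bookkeeping for the transposition move is right: the only nontrivial case is $\max(a,b)<s$, where the extra anchor $\min(a,b)$ at position $i$ absorbs exactly the odd-contribution of the anchor at position $i+1$ in the other construction (same previous anchor, same type $T$), while the displaced anchor at position $i+1$ contributes $0$ since its predecessor also has type $T$; the downstream anchors are likewise insensitive because the last anchor at position at most $i+1$ has type $T$ in both constructions. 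Your approach buys a local, mechanically checkable verification that never needs to identify \emph{which} blocks contain odd anchors; the paper's approach buys a structural description (one potential odd anchor per block) that it reuses in the proof of the main theorem. The one soft spot in your argument is the connectivity claim --- that these moves, applied so as to respect the convention about the vertex $1$, connect all valid constructions of a given labeled threshold graph. This rests on the uniqueness of the creation sequence of a threshold graph up to reordering within same-type batches and the type of the first vertex, which you assert via reduction to a canonical form rather than prove; note, however, that the paper records the same fact without proof in a remark and implicitly relies on it to make its notion of block well defined across constructions, so you are not below the paper's own standard of rigor here. It is worth stating explicitly that the transposition move is only ever applied where it preserves the convention, so that $1$ is never swapped out of the first position when it lies in the first block.
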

\begin{proof}
The existence of an anchor in a block does not depend on the construction.
This is because the least vertex in a block that contains an anchor will always be an anchor.
This also tells us that if a block contains an odd anchor in one construction, it does so in any construction.
In the case that $1$ is not added first, it can be shown that each block can have at most one odd anchor, the first anchor of the block, which proves our lemma in this case.

So, we have to deal with the case when $1$ is added first.
In this case, by our convention, $1$ is always an odd anchor.
Also, the second anchor, which is $2$, is an odd anchor if and only if it is recessive.
This property is invariant under choice of construction (following the convention) and the number of other odd anchors being the same is proved using the same logic as before.
\end{proof}

We can now prove our main theorem.

\begin{theorem}\label{thm}
The absolute value of the coefficient of $t^j$ in the polynomial $\chi_{\mathcal{T}_n}(t)$ is the number of threshold graphs with exactly $j$ odd anchors.
\end{theorem}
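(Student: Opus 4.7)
The plan is to combine \cref{thm1}, which identifies the absolute value of the coefficient of $t^j$ in $\chi_{\mathcal{T}_n}(t)$ with the number of threshold permutations having exactly $j$ odd cycles, with the bijection of \cref{tperms} between labeled threshold graphs and threshold permutations. Since \cref{noa} guarantees that the number of odd anchors is a well-defined invariant of the threshold graph, it suffices to prove that this bijection sends the statistic ``number of odd cycles'' of the threshold permutation to the statistic ``number of odd anchors'' of the corresponding threshold graph.

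First, I would observe that anchors and compartment endpoints coincide. Fix a labeled threshold graph, take its threshold pair in standard form $(\pi, w)$, and let $(\pi, w')$ be the corresponding threshold permutation. The compartmentalization procedure depends only on the underlying permutation $\pi$, and its endpoints are precisely the positions $i$ with $\pi_i < \pi_j$ for all $j > i$, which by definition are the anchors of the construction associated to $(\pi, w)$. So the compartments $c_1, \ldots, c_m$ of $(\pi, w')$ biject naturally with the anchors $a_1, \ldots, a_m$ read left to right at positions $i_1 < \cdots < i_m$, and it remains to match parities compartment by compartment. Concretely, the claim is that $N_-(c_k) \equiv o_k \pmod 2$, where $N_-(c_k)$ denotes the number of negative signs of $(\pi, w')$ inside $c_k$ and $o_k \in \{0, 1\}$ is the indicator that $a_k$ is an odd anchor.

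To prove this parity claim, I would use the explicit recipe defining $w'$ from $w$: for positions $i \geq 3$ the sign $w'_i$ is negative exactly when $i$ begins a new block of $(\pi, w)$, while the signs $w'_1$ and $w'_2$ are prescribed by a special rule depending on whether $\pi_1 = 1$. If $a_{k-1}$ lies in block $B_{l_{k-1}}$ and $a_k$ in $B_{l_k}$, then in all cases except $k = 1$ and the subcase $k = 2$ with $\pi_1 = 1$, the compartment $c_k$ avoids positions $1$ and $2$, so $N_-(c_k) = l_k - l_{k-1}$ is simply the number of inter-block transitions; since block signs alternate, this parity equals $[s_{l_k} \neq s_{l_{k-1}}] = o_k$. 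The remaining edge cases ($c_1$ always, and $c_2$ when $\pi_1 = 1$) require separately checking that the contributions of $w'_1$ and $w'_2$ combine with the block-boundary count to give the correct parity. The main technical obstacle is that the convention declaring vertex $1$ dominant whenever it lies in the first block must be exactly compensated by the prescription $w'_1 = -$ in that case, and more generally by a coordinated interplay between the sign $s_1$ of the first block, the block index containing $1$, and the type of $a_1$ imposed by the convention. A direct case split on $\pi_1 = 1$ versus $\pi_1 \neq 1$ and on $s_1$ resolves these, yielding $N_-(c_k) \equiv o_k \pmod 2$ for every $k$. Summing over $k$ equates the number of odd cycles of $(\pi, w')$ with the number of odd anchors of the threshold graph, which together with \cref{thm1} proves \cref{thm}.
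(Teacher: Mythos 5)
Your proposal is correct and follows essentially the same route as the paper: both reduce \cref{thm} to \cref{thm1} via the bijection with threshold permutations, identify anchors with the final elements of compartments, and verify compartment by compartment that the parity of negative signs (which, for positions beyond the first two, count block transitions between consecutive anchors) matches the odd-anchor indicator, with the first compartment(s) and the vertex-$1$ convention handled as special cases. Your formulation as the per-compartment congruence $N_-(c_k)\equiv o_k \pmod 2$ is a slightly cleaner packaging of the same argument.
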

\begin{proof}
We will prove that the threshold graph corresponding to a threshold permutation with $j$ odd cycles has $j$ odd anchors.
The proof will then follow from \Cref{thm1}.
Anchors in the construction using a threshold pair in standard form correspond to the compartments of the corresponding threshold permutation (last elements of compartments).
So we will show that a compartment in a threshold permutation is an odd cycle if and only if its last element is an odd anchor in the construction given by the corresponding threshold pair in standard form.

We will be working with threshold permutations.
However, when we say `blocks', we mean the blocks of the threshold pair in standard from corresponding to that threshold permutation.
Similarly, when we say `anchor', `odd anchor', `dominant vertex', or `recessive vertex', we are talking about the construction given by the corresponding threshold pair in standard form.

Suppose that $1$ is not the first number in the threshold permutation, i.e., $1$ is not in the first block.
First we will show that the first compartment, whose last element will be $1$, is an odd cycle if and only if $1$ is a dominant vertex.
Suppose that the first block is a recessive block.
This means that if the first block is $B = \{b_1 ,b_2, \ldots, b_k\}$, where $b_1 < b_2 < \cdots < b_k$, the threshold permutation starts as
\begin{equation*}
    \overset{-}b_1\ \overset{-}b_2\ \overset{+}b_3\ \cdots\ \overset{+}b_k\ \overset{-}{c}\ \cdots
\end{equation*}
Since $1$ occurs after $b_k$, we see that the first compartment should have one of $3,5,7,\ldots$  $-$ signs in it for it to be odd.
But this corresponds to $1$ being in the $2^{nd},4^{th},6^{th}, \ldots$ block in the threshold block order (since every $-$ sign from the third one represents a change in block).
But the even numbered blocks are precisely the ones that contain dominant vertices.
A similar logic works if the first block is dominant.
So the first compartment is an odd cycle if and only if $1$ is dominant, i.e., $1$ is an odd anchor.

It can be shown that for all compartments other than the first, the condition that the anchor of the compartment is of different type from the previous anchor translates to there being an odd number of $-$ signs in the compartment.
Hence, in the case that $1$ is not in the first block, we see that the number of odd anchors is equal to the number of odd cycles.

Now consider the case when $1$ is in the first block.
In this case, $1$ is an odd anchor and also the compartment in the threshold permutation containing $1$ is odd.
We will show that the second compartment is odd if and only if the second anchor, which is $2$, is recessive.
Let the first block be $B = \{1 , b_1,\ldots, b_k\}$.
If the first block is dominant, the threshold permutation starts as:
\begin{equation*}
     \overset{-}1\ \overset{+}b_1\ \overset{+}b_2\ \cdots\ \overset{+}b_k\ \overset{-}{c}\ \cdots
\end{equation*}Since $\overset{-}1$ forms the first compartment, we see that the second compartment is odd if and only if there an odd number of $-$ signs after $1$ and before $2$.
But this precisely means that the $2$ is in one of the $2^{nd},4^{th},6^{th}, \ldots$ blocks and hence recessive.
A similar logic works if the first block is recessive.
The fact that the other compartments are odd if and only if the anchor in that compartment is odd follows just as before.
\end{proof}

The labeled threshold graphs with $2$ and $3$ vertices, constructed following the convention about the vertex $1$, and their corresponding number of odd anchors are given in \cref{tab:my_label2g} and \cref{tab:my_label3g} respectively.
The odd anchors of the graph are the ones colored red.

\FloatBarrier
\begin{table}[H]
    \addtolength{\tabcolsep}{10pt}
    \renewcommand{\arraystretch}{1.25}
    \centering
    \begin{tabular}{|M{2.5cm}|M{3cm}|M{2.5cm}|}
        \hline
        Construction & Threshold graph & Number of odd anchors\\
        \hline
        $\overset{+}{1}\ \overset{-}{2}$ &
        \begin{tikzpicture}
        \node at (0.5,0.3) {};
        \node (1) [circle,draw=black,inner sep=2pt,fill=red!10] at (0,0) {\small $1$};
        \node (2) [circle,draw=black,inner sep=2pt,fill=red!10] at (1,0) {\small $2$};
        \end{tikzpicture}
        & 2\\
        \hline
        $\overset{+}{1}\ \overset{+}{2}$ &
        \begin{tikzpicture}
        \node at (0.5,0.3) {};
        \node (1) [circle,draw=black,inner sep=2pt,fill=red!10] at (0,0) {\small $1$};
        \node (2) [circle,draw=black,inner sep=2pt] at (1,0) {\small $2$};
        \draw (1)--(2);
        \end{tikzpicture}
        & 1\\
        \hline
    \end{tabular}
    \caption{Labeled threshold graphs with $2$ vertices, each constructed following our convention.}
    \label{tab:my_label2g}
\end{table}
\FloatBarrier
\begin{table}[H]
    \addtolength{\tabcolsep}{10pt}
    \renewcommand{\arraystretch}{1.25}
    \centering
    \begin{tabular}{|M{2.5cm}|M{3cm}|M{2.5cm}|}
        \hline
        Construction & Threshold graph & Number of odd anchors\\
        \hline
        $\overset{+}{1}\ \overset{-}{2}\ \overset{+}{3}$ &
        \begin{tikzpicture}
        \node at (0.5,1.085) {};
        \node (1) [circle,draw=black,inner sep=2pt,fill=red!10] at (0,0) {\small $1$};
        \node (2) [circle,draw=black,inner sep=2pt,fill=red!10] at (1,0) {\small $2$};
        \node (3) [circle,draw=black,inner sep=2pt,fill=red!10] at (0.5,0.77) {\small $3$};
        \draw (2)--(3);
        \draw (1)--(3);
        \end{tikzpicture}
        & 3\\
        \hline
        $\overset{+}{1}\ \overset{+}{3}\ \overset{-}{2}$ &
        \begin{tikzpicture}
        \node at (0.5,1.085) {};
        \node (1) [circle,draw=black,inner sep=2pt,fill=red!10] at (0,0) {\small $1$};
        \node (2) [circle,draw=black,inner sep=2pt] at (1,0) {\small $3$};
        \node (3) [circle,draw=black,inner sep=2pt,fill=red!10] at (0.5,0.77) {\small $2$};
        \draw (1)--(2);
        \end{tikzpicture}
        & 2\\
        \hline
        $\overset{+}{1}\ \overset{+}{2}\ \overset{-}{3}$ &
        \begin{tikzpicture}
        \node at (0.5,1.085) {};
        \node (1) [circle,draw=black,inner sep=2pt,fill=red!10] at (0,0) {\small $1$};
        \node (2) [circle,draw=black,inner sep=2pt] at (1,0) {\small $2$};
        \node (3) [circle,draw=black,inner sep=2pt,fill=red!10] at (0.5,0.77) {\small $3$};
        \draw (1)--(2);
        \end{tikzpicture}
        & 2\\
        \hline
        $\overset{+}{1}\ \overset{-}{2}\ \overset{-}{3}$ &
        \begin{tikzpicture}
        \node at (0.5,1.085) {};
        \node (1) [circle,draw=black,inner sep=2pt,fill=red!10] at (0,0) {\small $1$};
        \node (2) [circle,draw=black,inner sep=2pt,fill=red!10] at (1,0) {\small $2$};
        \node (3) [circle,draw=black,inner sep=2pt] at (0.5,0.77) {\small $3$};
        \end{tikzpicture}
        & 2\\
        \hline
        $\overset{+}{1}\ \overset{+}{2}\ \overset{+}{3}$ &
        \begin{tikzpicture}
        \node at (0.5,1.085) {};
        \node (1) [circle,draw=black,inner sep=2pt,fill=red!10] at (0,0) {\small $1$};
        \node (2) [circle,draw=black,inner sep=2pt] at (1,0) {\small $2$};
        \node (3) [circle,draw=black,inner sep=2pt] at (0.5,0.77) {\small $3$};
        \draw (1)--(2);
        \draw (2)--(3);
        \draw (1)--(3);
        \end{tikzpicture}
        & 1\\
        \hline
        $\overset{+}{1}\ \overset{-}{3}\ \overset{+}{2}$ &
        \begin{tikzpicture}
        \node at (0.5,1.085) {};
        \node (1) [circle,draw=black,inner sep=2pt,fill=red!10] at (0,0) {\small $1$};
        \node (2) [circle,draw=black,inner sep=2pt] at (1,0) {\small $3$};
        \node (3) [circle,draw=black,inner sep=2pt] at (0.5,0.77) {\small $2$};
        \draw (2)--(3);
        \draw (1)--(3);
        \end{tikzpicture}
        & 1\\
        \hline
        $\overset{-}{2}\ \overset{-}{3}\ \overset{+}{1}$ &
        \begin{tikzpicture}
        \node at (0.5,1.085) {};
        \node (1) [circle,draw=black,inner sep=2pt] at (0,0) {\small $2$};
        \node (2) [circle,draw=black,inner sep=2pt] at (1,0) {\small $3$};
        \node (3) [circle,draw=black,inner sep=2pt,fill=red!10] at (0.5,0.77) {\small $1$};
        \draw (2)--(3);
        \draw (1)--(3);
        \end{tikzpicture}
        & 1\\
        \hline
        $\overset{+}{2}\ \overset{+}{3}\ \overset{-}{1}$ &
        \begin{tikzpicture}
        \node at (0.5,1.085) {};
        \node (1) [circle,draw=black,inner sep=2pt] at (0,0) {\small $2$};
        \node (2) [circle,draw=black,inner sep=2pt] at (1,0) {\small $3$};
        \node (3) [circle,draw=black,inner sep=2pt] at (0.5,0.77) {\small $1$};
        \draw (1)--(2);
        \end{tikzpicture}
        & 0\\
        \hline
    \end{tabular}
    \bigskip
    \caption{Labeled threshold graphs with $3$ vertices, each constructed following our convention.}
    \label{tab:my_label3g}
\end{table}

\section{Acknowledgements}
The authors sincerely thank Sam Spiro and Richard Stanley for comments on an earlier version of this paper and the anonymous referee for helpful suggestions.

\bibliographystyle{abbrv} 
\bibliography{refs} 

\end{document}